\theoremstyle{definition}
 \newtheorem{theo}{Theorem}[section]
 \newtheorem{propo}[theo]{Proposition}
 \newtheorem{lem}[theo]{Lemma}
 \newtheorem{cor}[theo]{Corollary}
 \newtheorem{obs}[theo]{Observation}
 \newtheorem{ejem}[theo]{Example}
 \newtheorem{defin}[theo]{Definition}
\def\L{\Lambda}
\def\G{\Gamma}
\def\o{\mathfrak{o}}
\def\A{\mathscr{A}}
\def\B{\mathscr{B}}
\def\C{\mathscr{C}}
\def\D{\mathscr{D}}
\def\T{\mathscr{T}}
\def\V{\mathscr{V}}
\def\Q{\mathcal{Q}}
\def\occ{\textrm{occ}}
\def\val{\textrm{val}}
\def\CC{\mathscr{CC}}
\def\hom{\textrm{Hom}}
\def\olCC{\mathscr{AA}}
\def\M{\mathscr{M}}
\newcommand\blfootnote[1]{
  \begingroup
  \renewcommand\thefootnote{}\footnote{#1}
  \addtocounter{footnote}{-1}
  \endgroup
}
\title{The Dimension of the Center of a Brauer Configuration Algebra}
\author{Alex Sierra C.}
\date{}
\begin{document}

\maketitle

\begin{abstract}
We determine the dimension of the center of a Brauer configuration algebra in terms of combinatorial data from a Brauer configuration.
\end{abstract}
\blfootnote{2010 \textit{Mathematics Subject Classification}. 16G20, 16G99.\\\textit{Keywords and phrases}. Brauer graph configuration, Brauer graph algebra, center.\\
\indent This paper is part of my PhD thesis which was defended in IME-USP with support of a schollarship of PROEX-CAPES. This was done with the supervision of my adviser E. N. Marcos and my co-adviser S. Schroll.}
\section{Introduction}
Brauer configuration algebras were recently added to the mathematical literature by E. Green and S. Schroll in \cite{brau} as a generalization of Brauer graph algebras. As the authors mention in \cite[Introduction]{brau} this class of algebras is a new class of mostly wild algebras whose additional structure arises from a combinatorial data, called Brauer configuration. Brauer configuration algebras are a generalization of Brauer graph algebras, in the sense that every Brauer graph is a Brauer configuration and every Brauer graph algebra is a Brauer configuration algebra. 

Brauer graph algebras are so well-studied and understood because the combinatorial data of the underlying Brauer graph encodes many aspects of the representation theory of a Brauer graph algebra. For example, the Yoneda algebra \cite{green2}, or group actions and coverings \cite{green1}, just to mention a few. So, the expectation is that the Brauer configuration will encode the representation theory of Brauer configuration algebras.


\section{Preliminaries and Notation}\label{secc00b}
We will give a quick review of the definition of a Brauer configuration and its associated Brauer configuration algebra. We refer the reader to \cite[Sections 1 and 2]{brau} for complete details and many examples.

We begin with a tuple $\G=(\G_0,\G_1,\mu)$ where $\G_0$ is a finite set whose elements we call \textit{vertices}, $\mu:\G_0\to\mathbb{Z}_{>0}$ a function, called a \textit{multiplicity function} of $\G$, and $\G_1$ is a finite collection of labeled finite sets of vertices where repetitions are allowed. We can also say that $\G_1$ is a finite collection of finite labeled multisets whose elements are in $\G_0$. We call the elements of $\G_1$ \textit{polygons}. Given a polygon $V$, we call its elements the \textit{vertices of} $V$. If $V\in\G_1$ and $\alpha\in\G_0$, define $\occ(\alpha,V)$ to be the number of times $\alpha$ occurs as a vertex in $V$ and define the \textit{valency} of $\alpha$, denoted by $\val(\alpha)$, to be the value $\sum\limits_{V\in\G_1}\occ(\alpha,V)$.

An \textit{orientation for} $\G$ is a choice, for each vertex $\alpha\in\G_0$, of a cyclic ordering of the polygons in which $\alpha$ occurs as a vertex, including repetitions. To be more precise, for each $\alpha\in\G_0$ such that $\val(\alpha)=t>1$ or $\mu(\alpha)>1$, let $V_1,\ldots, V_t$ be the list of polygons in which $\alpha$ occurs as a vertex, with a polygon $V$ occurring $\occ(\alpha,V)$ times in the list, that is $V$ occurs the number of times $\alpha$ occurs as a vertex in $V$. The cyclic order at vertex $\alpha$ is obtained by linearly ordering the list, say $V_{i_1}<\cdots<V_{i_t}$ and by adding $V_{i_t}<V_{i_1}$. We observe that any cyclic permutation of a chosen cyclic ordering at vertex $\alpha$ can represent the same ordering. That is, if $V_1<\cdots<V_t$ is the chosen cyclic ordering at vertex $\alpha$, so is a cyclic permutation such as $V_2<V_3<\cdots<V_t<V_1$ or $V_3<V_4<\cdots<V_t<V_1<V_2$.

\begin{defin}\label{defbra}
 A \textit{Brauer configuration} is a tuple $\G=(\G_0,\G_1,\mu,\o)$, where $\G_0$ is a set of vertices, $\G_1$ a set of polygons, $\mu$ is a multiplicity function, and $\o$ is an orientation for $\G$, such that the following conditions hold
 \begin{enumerate}
  \item[C1.] Every vertex in $\G_0$ is a vertex in at least one polygon in $\G_1$.
  \item[C2.] Every polygon in $\G_1$ has at least two vertices (which can be the same).
  \item[C3.] Every polygon in $\G_1$ has at least one vertex $\alpha$ such that $\val(\alpha)\mu(\alpha)>1$.
 \end{enumerate}
\end{defin}


We now define some special sets formed by vertices of a Brauer configuration. They will be used to simplify the notation in the process of the computations. For the Brauer configuration $\G$ we define the following special subsets of $\G_0$.

\begin{eqnarray*}
 \T_{\G} & = & \{\,\alpha\in\G_0\,|\,\mu(\alpha)\val(\alpha)=1\,\},\\
 \C_{\G} & = & \{\,\alpha\in\G_0\,|\,\val(\alpha)=1\textrm{ and }\mu(\alpha)>1\,\},\\
 \D_{\G} & = & \{\,\alpha\in\G_0\,|\,\val(\alpha)>1\,\}.
\end{eqnarray*}

Any element of $\T_{\G}$ is called a \textit{truncated vertex} of $\G$ (see \cite[Definition 1.3]{brau}). Thus, we call the set  $\G_0\setminus\T_{\G}$ the collection of the \textit{non-truncated vertices}. We can decompose the set $\D_{\G}$ into the following sets.
\begin{eqnarray*}
 \A_{\G} & = & \{\,\alpha\in\D_{\G}\,|\,\mu(\alpha)>1\,\},\\
 \B_{\G} & = & \{\,\alpha\in\D_{\G}\,|\,\mu(\alpha)=1\,\}.
\end{eqnarray*}

It is clear that $\G_0\setminus\T_{\G}=\C_{\G}\dot{\cup}\D_{\G}=\A_{\G}\dot{\cup}\B_{\G}\dot{\cup}\C_{\G}$.\\

Let $\G=(\G_0,\G_1,\mu,\o)$ be a Brauer configuration. For $\alpha\in\G_0\setminus\T_{\G}$ consider the list of polygons $V$ containing $\alpha$ such that $V$ occurs in this list $\occ(\alpha,V)$ times. We know that the orientation $\o$ provides a cyclic ordering of this list. We call such a cyclically ordered list the \textit{successor sequence at} $\alpha$. Suppose that $\alpha\in\D_{\G}$ and that $V_1<\cdots<V_t$ is the successor sequence at $\alpha$ (here $\val(\alpha)=t$). Then we say that $V_{i+1}$ is the \textit{successor of} $V_i$ \textit{at} $\alpha$, for $1\le i\le t$, where $V_{t+1}=V_1$. Note that for the case $\alpha\in\C_{\G}$ there exists a unique polygon $V$ such that $\alpha\in V$, then the successor sequence at $\alpha$ is just $V$.\\

Given $\G=(\G_0,\G_1,\mu,\o)$ we define the quiver $\Q_{\G}$ as follows. The vertex set $\{\,v_1,\ldots,v_m\,\}$ of $\Q_{\G}$ is in a bijection with the set of polygons $\{\,V_1,\ldots,V_m\,\}$ in $\G_1$, noting that there is exactly one vertex in $\Q_{\G}$ for every polygon in $\G_1$. We call $v_i$ (resp. $V_i$) the vertex (resp. polygon) \textit{associated} to $V_i$ (resp. $v_i$). In order to define the arrows in $\Q_{\G}$, we use the successor sequences. For each $\alpha\in\G_0\setminus\T_{\G}$, and each successor $V'$ of $V$ at $\alpha$, there is an arrow from $v$ to $v'$ in $\Q_{\G}$, where $v$ and $v'$ are the vertices in $\Q_{\G}$ associated to the polygons $V$ and $V'$ in $\G_1$, respectively.\\

We should note that $V'$ can be a successor of $V$ more than once at a given vertex of $\G_0$. We also note that $V'$ can be a  successor of $V$ at more than one vertex of $\G_0$, and for each such occurrence there is an arrow from $v$ to $v'$. In particular, $\Q_{\G}$ may have multiple arrows from $v$ to $v'$. We will see an example of this (Example \ref{ej01}).\\

For each $\alpha\in\D_{\G}$ with successor sequence $V_{i_1}<\cdots<V_{i_{\val(\alpha)}}$ we have a corresponding collection of arrows in $\Q_{\G}$

\begin{equation}\label{001}
v_{i_1}\stackrel{a^{(\alpha)}_{j_1}}{\longrightarrow} v_{i_2} \stackrel{a^{(\alpha)}_{j_2}}{\longrightarrow} \cdots \stackrel{a^{(\alpha)}_{j_{\val(\alpha)-1}}}{\longrightarrow}v_{i_{\val(\alpha)}}\stackrel{a^{(\alpha)}_{j_{\val(\alpha)}}}{\longrightarrow} v_{i_1}
\end{equation}
Let $C_l=a^{(\alpha)}_{j_{l}}a^{(\alpha)}_{j_{l+1}}\cdots a^{(\alpha)}_{j_{\val(\alpha)}}a^{(\alpha)}_{j_{1}}\cdots a^{(\alpha)}_{j_{l-1}}$ be the oriented cycle in $\Q_{\G}$, for $1\le l\le\val(\alpha)$. We call any of these cycles a \textit{special} $\alpha$-\textit{cycle}. We observe that when $\alpha\in\C_{\G}$, we have just one special $\alpha$-cycle, which is a loop at the vertex in $\Q_{\G}$ associated to the unique polygon containing $\alpha$. Now fix a polygon $V$ in $\G_1$ and suppose that $\occ(\alpha,V)=t\ge1$. Then there are $t$ indices $l_1,\ldots,l_t$ such that $V=V_{i_{l_r}}$ for every $1\le r\le t$. We call any of the cycles $C_{l_1},\ldots,C_{l_t}$ a \textit{special} $\alpha$-\textit{cycle at} $v$, and we denote the collection of these cycles in $\Q_{\G}$ by $\C_{(\alpha)}^{\,v}$. Note that $|\C_{(\alpha)}^{\,v}|=\occ(\alpha,V)$. And if we define $\V_{(\alpha)}=\{\,V\in\G_1\,|\,\alpha\in V\,\}$, the set of polygons containing $\alpha$, we define \begin{equation}\label{038}\CC_
{(\alpha)}:=\bigcup_{V\in\V_{(\alpha)}}\C_{(\alpha)}^{\,v}.\end{equation} This is the collection of all  special $\alpha$-cycles. For the particular case $\alpha\in\C_{\G}$ we define $\CC_{(\alpha)}$ as the set 
consisting of the unique loop associated to the vertex $\alpha$, and by $\V_{(\alpha)}$ the set consisting of the only polygon containing $\alpha$.\\
 From these observations and definitions we have that for any $\alpha\in\G_0\setminus\T_{\G}$ \begin{eqnarray}|\CC_{(\alpha)}| & = & \sum_{V\in\V_{(\alpha)}}\occ(\alpha,V)\nonumber\\ & = & \sum_{V\in\G_1}\occ(\alpha,V)\nonumber\\ & = & \val(\alpha)\label{039}.\end{eqnarray} Before giving the definition of a Brauer configuration algebra we need one last definition. Let $\CC$ be the set of special cycles in $\Q_{\G}$, that is \begin{equation}\label{040}\CC:=\bigcup_{\alpha\in\G_0\setminus\T_{\G}}\CC_{(\alpha)}\end{equation} and let $f:\CC\to(\Q_{\G})_1$ be the map which sends a special cycle to its first arrow.
\begin{propo}\label{pr005}
 Let $\G=(\G_0,\G_1,\mu,\o)$ be a Brauer configuration with associated quiver $\Q$. Then \[|\Q_1|=\sum_{\alpha\in\G_0\setminus\T_{\G}}\val(\alpha),\] and for each $a\in\Q_1$ there exists a unique $\alpha\in\G_0\setminus\T_{\G}$ such that $a$ occurs once in any of the special cycles in $\CC_{(\alpha)}$.
\end{propo}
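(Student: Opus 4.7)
My plan is to read both statements directly off the definition of $\Q_\G$. That definition attaches one arrow to each pair $(\alpha, l)$ where $\alpha\in\G_0\setminus\T_\G$ and $l$ indexes a consecutive pair in the cyclically ordered successor sequence at $\alpha$. So the first thing I would do is make this correspondence explicit and argue that it is a bijection between $\Q_1$ and the disjoint union, over $\alpha\in\G_0\setminus\T_\G$, of the sets of successor pairs at $\alpha$.

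Once that is done, the counting statement is a direct bookkeeping. For $\alpha\in\D_\G$ the successor sequence is a cyclic list of length $\val(\alpha)\ge 2$, which has exactly $\val(\alpha)$ consecutive pairs (including the wrap-around $V_{i_{\val(\alpha)}}, V_{i_1}$), producing $\val(\alpha)$ arrows as displayed in (\ref{001}). For $\alpha\in\C_\G$ the unique polygon containing $\alpha$ is its own cyclic successor, so one gets a single loop, in agreement with $\val(\alpha)=1$. Since arrows produced by distinct non-truncated vertices are distinct by construction, summing over $\alpha\in\G_0\setminus\T_\G$ yields
\[
|\Q_1|=\sum_{\alpha\in\G_0\setminus\T_\G}\val(\alpha).
\]

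For the second assertion, I fix $a\in\Q_1$. By the bijection above, $a$ is born from a unique $\alpha\in\G_0\setminus\T_\G$ and a unique index $l$, so $a=a^{(\alpha)}_{j_l}$ in the notation of (\ref{001}); in particular $a$ does not occur in any $\CC_{(\beta)}$ with $\beta\ne\alpha$. On the other hand, the $\val(\alpha)$ special $\alpha$-cycles $C_1,\ldots,C_{\val(\alpha)}$ are by definition the cyclic rotations of the single word $a^{(\alpha)}_{j_1}a^{(\alpha)}_{j_2}\cdots a^{(\alpha)}_{j_{\val(\alpha)}}$, so each of them contains $a$ exactly once. This gives existence and uniqueness of $\alpha$.

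I do not foresee a genuine obstacle: once the construction of the arrows is unpacked, both parts are essentially tautological. The only mild point that deserves care is the uniform treatment of the cases $\alpha\in\D_\G$ and $\alpha\in\C_\G$; in particular, for $\alpha\in\C_\G$ the unique loop at the vertex of $\Q_\G$ associated to the polygon containing $\alpha$ is simultaneously the only arrow contributed by $\alpha$ and the only element of $\CC_{(\alpha)}$, so it trivially occurs once in the (only) special $\alpha$-cycle, matching the general statement.
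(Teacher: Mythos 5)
Your proof is correct and takes essentially the same route as the paper: both arguments rest on the bijection between the arrows of $\Q_{\G}$ and the positions in the successor sequences of the non-truncated vertices, from which the count $|\Q_1|=\sum_{\alpha\in\G_0\setminus\T_{\G}}\val(\alpha)$ and the uniqueness of $\alpha$ follow at once. The only difference is that you extract this bijection directly from the construction of $\Q_{\G}$, whereas the paper packages it as the bijectivity of the first-arrow map $f$ together with the partition of $\CC$ into the sets $\CC_{(\alpha)}$, citing these properties from Green--Schroll, Section 2.3.
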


\begin{proof}
 By the properties stated for special cycles in \cite[Section 2.3]{brau} we have
 \begin{itemize}
  \item $\left\{\CC_{(\alpha)}\,|\,\alpha\in\G_0\setminus\T_{\G}\right\}$ is a partition of $\CC$;
  \item the map $f$ defined above is a bijective function.
 \end{itemize}
Then by the bijection of $f$ and the equalities in (\ref{039}) and (\ref{040}), we obtain $|\Q_1|=\sum_{\alpha\in\G_0\setminus\T_{\G}}\val(\alpha)$. This value also coincides with the number of special cycles. The second assertion follows from the bijection of $f$ and the fact that any two special cycles in $\CC_{(\alpha)}$ are cyclic permutations of one another (see (F'3) and (F'4) in \cite[Section 2.3]{brau}).
\end{proof}

\subsection{Definition of a Brauer configuration algebra}
We define a set of elements $\rho_{\G}$ in $K\Q_{\G}$ which will generate the ideal of relations $I_{\G}$ of the Brauer configuration algebra associated to the Brauer configuration $\G$. The set of relations $\rho_{\G}$ is defined by the following three types of relations:\\\\
\textit{Relations of type one.} It is the subset of $K\Q_{\G}$ \[\bigcup_{V\in\G_1}\left(\bigcup_{\alpha,\beta\in V\setminus\T_{\G}}\left\{\,C^{\mu(\alpha)}-D^{\mu(\beta)}\,|\,C\in\C_{(\alpha)}^{\,v},D\in\C_{(\beta)}^{\,v}\,\right\}\right).\]
\\
\textit{Relations of type two.} It is the subset of $K\Q_{\G}$ \[\bigcup_{\alpha\in\G_0\setminus\T_{\G}}\left\{\,C^{\mu(\alpha)}f(C)\,|\,C\in\CC_{(\alpha)}\,\right\}.\]
\\
\textit{Relations of type three.} It is the set of all quadratic monomial relations of the form $ab$ in $K\Q_{\G}$ where $ab$ is not a subpath of any special cycle.
\begin{defin}
 Let $K$ be a field and $\G$ a Brauer configuration. The \textit{Brauer configuration algebra} $\L_{\G}$ \textit{associated to} $\G$ is defined to be $K\Q_{\G}/I_{\G}$, where $\Q_{\G}$ is the quiver associated to $\G$ and $I_{\G}$ is the ideal in $K\Q_{\G}$ generated by the set of relations $\rho_{\G}$ of type one, two and three.
\end{defin}

We now refer the reader to \cite[page 545]{brau} to find the definition of a \textit{connected} Brauer configuration. 

\begin{propo}
 Let $\L$ be the Brauer configuration algebra associated to the connected Brauer configuration $\G$. Then rad$^2(\L)\neq0$.
\end{propo}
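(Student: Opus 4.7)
The plan is to exhibit an explicit nonzero element of $\mathrm{rad}^2(\L)$. Let $J$ denote the arrow ideal of $K\Q_\G$. Every generator of $I_\G$ (of type one, two, or three) visibly lies in $J^2$, so $I_\G\subseteq J^2$ and hence $\mathrm{rad}^2(\L)=(J^2+I_\G)/I_\G$. It therefore suffices to display a path of length at least two in $\Q_\G$ whose image in $\L$ is nonzero.

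Since $\G$ is connected, $\G_1\neq\emptyset$. Fix any $V\in\G_1$ and let $v$ be its associated vertex in $\Q_\G$. By condition C3 there exists a non-truncated vertex $\alpha\in V$ with $\mu(\alpha)\val(\alpha)\geq 2$. Choose a special $\alpha$-cycle $C\in\C_{(\alpha)}^{\,v}$ and set $p:=C^{\mu(\alpha)}$. Then $p$ is an oriented cycle at $v$ in $\Q_\G$ of length exactly $\mu(\alpha)\val(\alpha)\geq 2$, so $p\in J^2$. I would then verify that $p$ remains nonzero in $\L$; together with the previous observation this yields $\mathrm{rad}^2(\L)\neq 0$.

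The main obstacle is precisely confirming $p\neq 0$ in $\L$, since a priori $p$ might coincide with some $K$-linear combination of products involving the defining relations. The cleanest route is to invoke the explicit $K$-basis of a Brauer configuration algebra from \cite{brau}: in that basis, $p$ represents, up to the identifications induced by the type one relations, the socle generator of the indecomposable projective module $\L e_v$, which is one-dimensional and nonzero. An alternative, more hands-on argument would analyse the lengths of the defining relations --- type three generators are quadratic monomials that are \emph{not} subpaths of special cycles, while the type one and type two generators involve only paths of length at least $\mu(\alpha)\val(\alpha)$ and act either by identifying socle elements or by annihilating the strictly longer path $C^{\mu(\alpha)}f(C)$ --- and conclude that none of them can reduce the cycle $p$ itself to zero.
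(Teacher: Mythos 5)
Your proposal is correct and takes essentially the same route as the paper: both exhibit an explicit path of length at least two arising from a special $\alpha$-cycle of a vertex $\alpha$ with $\mu(\alpha)\val(\alpha)\ge 2$ (supplied by condition C3), and both appeal to the basis description of \cite[Proposition 3.3]{brau} to conclude that this path is nonzero in $\L$. The only (cosmetic) difference is that you use the socle element $C^{\mu(\alpha)}$ as a single uniform witness, whereas the paper splits into the cases $\val(\alpha)<2$ for all $\alpha$ (where a loop squared works) and $\val(\alpha)\ge 2$ for some $\alpha$ (where two consecutive arrows of a special cycle work).
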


\begin{proof}
Let $\G=(\G_0,\G_1,\mu,\o)$ be a connected Brauer configuration, and let $\L=\Q/I$ be the induced Brauer configuration algebra. By \cite[Proposition 3.5]{brau} we have that $\L$ is an indecomposable algebra. Now, we assume that $\val(\alpha)<2$, for all $\alpha\in\G_0$. Because of the connectedness of $\G$ we have that necessarily $\G_1=\left\{V\right\}$, then the induced quiver $\Q$ has an only one vertex $v$, which is the vertex associated to $V$. Then $\Q$ has the form \begin{equation*}\xymatrix{\stackrel{v}{\cdot}\ar@(l,u)\ar@(u,r)\ar@(dr,dl)@{..}}\end{equation*}
where the number of loops at $v$ coincides with the number of nontruncated vertices in $V$. By condition C3 of Definition \ref{defbra} there exists $\alpha\in V$ such that $\mu(\alpha)\ge2$. If $a$ is the loop in $\Q$ associated to $\alpha$, then the class of $a^2$ in $\L$ is a non zero element of rad$^2(\L)$ by \cite[Proposition 3.3]{brau}.

Now, we assume that there exists a vertex $\alpha\in\G_0$ such that $\val(\alpha)\ge2$. Then we have that any special $\alpha$-cycle in $\Q$ contains at least two consecutive arrows. Hence by \cite[Proposition 3.3]{brau} we have again that  rad$^2(\L)\neq0$.
\end{proof}

\section{Basis in $v\L v$.}



Let $\G$ be a Brauer configuration and $\L$ the induced Brauer configuration algebra. Before giving the set of elements of a $K$-basis of $v\L v$ for the vertex $v$ in $\Q$ associated to the polygon $V$, we introduce a simple graphical method which allows us to recognize all the possible elements, in terms of a basis, that live in $v\L v$. We start with an example

\begin{ejem}\label{ej01}
 Let $\G=(\G_0,\G_1,\mu,\o)$ be a Brauer configuration and let $\Q$ be its induced quiver. We suppose that in the configuration $\G$ we have a non-trucated vertex $\alpha$ which has a successor sequence given by \[\alpha: V<V<V<V<V<W<W<W<W<W.\] We see at this case that $\occ(\alpha,V)=\occ(\alpha,W)=5$. In particular, we also see that, for example, $V$ is the successor of $V$ at $\alpha$ four times. So having in mind this successor sequence we can say that attached to the quiver $\Q$ we will have the subquiver.
 \begin{equation}\label{037}\begin{split}\xymatrix{\stackrel{w}{\cdot}\ar@(r,ur)_{a^{(\alpha)}_6}\ar@(ur,u)_{a^{(\alpha)}_7}\ar@(u,ul)_{a^{(\alpha)}_8}\ar@(ul,l)_{a^{(\alpha)}_9}\ar@/_1pc/[dd]_{a^{(\alpha)}_{10}} \\ \\ \stackrel{v}{\cdot}\ar@(l,dl)_{a^{(\alpha)}_1}\ar@(dl,d)_{a^{(\alpha)}_2}\ar@(d,dr)_{a^{(\alpha)}_3}\ar@(dr,r)_{a^{(\alpha)}_4}\ar@/_1pc/[uu]_{a^{(\alpha)}_5} }\end{split}\end{equation}
 It is easy to see the special $\alpha$-cycles at $v$ and $w$. But there are many more elements that are in the space $v\L v$ and they are not special cycles. This is not easily seen from the previous graphical expression. So the idea is to look at this quiver in a different way. We open up the entire cycle like this.
 
 \begin{figure}[H]
 \centering
 \begin{pspicture}(-2.8,-2.8)(2.8,2.8)
  \psarc[linewidth=1.1pt]{->}(0,0){2}{0}{36}
  \psarc[linewidth=1.1pt]{->}(0,0){2}{36}{72}
  \psarc[linewidth=1.1pt]{->}(0,0){2}{72}{108}
  \psarc[linewidth=1.1pt]{->}(0,0){2}{108}{144}
  \psarc[linewidth=1.1pt]{->}(0,0){2}{144}{180}
  \psarc[linewidth=1.1pt]{->}(0,0){2}{180}{216}
  \psarc[linewidth=1.1pt]{->}(0,0){2}{216}{252}
  \psarc[linewidth=1.1pt]{->}(0,0){2}{252}{288}
  \psarc[linewidth=1.1pt]{->}(0,0){2}{288}{324}
  \psarc[linewidth=1.1pt]{->}(0,0){2}{324}{360}
  \rput[bl](1.902,0.618){{\footnotesize $a^{(\alpha)}_3$}}
  \rput[bl](1.176,1.618){\footnotesize $a^{(\alpha)}_4$}
  \rput[b](0,2){\footnotesize$a^{(\alpha)}_5$}
  \rput[br](-1.176,1.618){\footnotesize$a^{(\alpha)}_6$}
  \rput[br](-1.902,0.618){\footnotesize$a^{(\alpha)}_7$}
  \rput[tr](-1.902,-0.618){\footnotesize$a^{(\alpha)}_8$}
  \rput[tr](-1.176,-1.618){\footnotesize$a^{(\alpha)}_{9}$}
  \rput[t](0,-2){\footnotesize$a^{(\alpha)}_{10}$}
  \rput[tl](1.176,-1.618){\footnotesize$a^{(\alpha)}_1$}
  \rput[tl](1.902,-0.618){\footnotesize$a^{(\alpha)}_2$}
  \rput[tl](0.618,-1.902){\large$v$}
  \rput[tl](1.618,-1.176){\large$v$}
  \rput[l](2,0){\large$v$}
  \rput[bl](1.696,1.040	){\large$v$}
  \rput[bl](0.618,1.902){\large$v$}
  \rput[br](-0.618,1.902){\large$w$}
  \rput[br](-1.618,1.176){\large$w$}
  \rput[r](-2,0){\large$w$}
  \rput[tr](-1.618,-1.176){\large$w$}
  \rput[tr](-0.618,-1.902){\large$w$}
 \end{pspicture}
 \caption{}\label{fig1}
\end{figure}

If we are only interested to look for oriented cycles at the vertex $v$ of the quiver, we simply erase the $w$'s and consider the $v$'s. 

\begin{figure}[H]
\centering
 \begin{pspicture}(-2.8,-2.8)(2.8,2.8)
  \psarc[linewidth=1.1pt]{->}(0,0){2}{0}{36}
  \psarc[linewidth=1.1pt]{->}(0,0){2}{36}{72}
  \psarc[linewidth=1.1pt]{->}(0,0){2}{72}{108}
  \psarc[linewidth=1.1pt]{->}(0,0){2}{108}{144}
  \psarc[linewidth=1.1pt]{->}(0,0){2}{144}{180}
  \psarc[linewidth=1.1pt]{->}(0,0){2}{180}{216}
  \psarc[linewidth=1.1pt]{->}(0,0){2}{216}{252}
  \psarc[linewidth=1.1pt]{->}(0,0){2}{252}{288}
  \psarc[linewidth=1.1pt]{->}(0,0){2}{288}{324}
  \psarc[linewidth=1.1pt]{->}(0,0){2}{324}{360}
  \rput[bl](1.902,0.618){{\footnotesize $a^{(\alpha)}_3$}}
  \rput[bl](1.176,1.618){\footnotesize $a^{(\alpha)}_4$}
  \rput[b](0,2){\footnotesize$a^{(\alpha)}_5$}
  \rput[br](-1.176,1.618){\footnotesize$a^{(\alpha)}_6$}
  \rput[br](-1.902,0.618){\footnotesize$a^{(\alpha)}_7$}
  \rput[tr](-1.902,-0.618){\footnotesize$a^{(\alpha)}_8$}
  \rput[tr](-1.176,-1.618){\footnotesize$a^{(\alpha)}_{9}$}
  \rput[t](0,-2){\footnotesize$a^{(\alpha)}_{10}$}
  \rput[tl](1.176,-1.618){\footnotesize$a^{(\alpha)}_1$}
  \rput[tl](1.902,-0.618){\footnotesize$a^{(\alpha)}_2$}
  \rput[tl](0.618,-1.902){\large$v$}
  \rput[tl](1.618,-1.176){\large$v$}
  \rput[l](2,0){\large$v$}
  \rput[bl](1.696,1.040	){\large$v$}
  \rput[bl](0.618,1.902){\large$v$}
 \end{pspicture}
 \caption{}\label{fig2}
\end{figure}
Now consider the following elements \[q^{(\alpha,v)}_1=a^{(\alpha)}_1,\,\,q^{(\alpha,v)}_2=a^{(\alpha)}_2,\,\,q^{(\alpha,v)}_3=a^{(\alpha)}_3,\]\[q^{(\alpha,v)}_4=a^{(\alpha)}_4,\,\,q^{(\alpha,v)}_5=a^{(\alpha)}_5a^{(\alpha)}_6\cdots a^{(\alpha)}_{10}\] Each of this elements is an oriented cycle at $v$. By defining $q^{(\alpha,v)}_6=q^{(\alpha,v)}_1$ and reducing subindices modulo $\occ(\alpha,V)=5$, we also have that  all elements in the set \[\bigcup_{r=1}^4\left\{\,q^{(\alpha,v)}_l\cdots q^{(\alpha,v)}_{l+r-1}\,|\,1\le l\le5\,\right\}\] are oriented cycles at $v$. As we can see there are many oriented cycles at the vertex $v$ which are not special cycles. We can do the same to find oriented cycles for the vertex $w$. These ideas can be generalized.
\end{ejem}

\subsection{Non-special cycles}\label{subsecc00c}

We see in \cite[Proposition 3.3]{brau} that for a Brauer configuration algebra $\L$ induced by the configuration $\G$, the elements of a basis of $\L$ are all prefixes of the elements $C^{\mu(\alpha)}$, where $C\in\CC_{(\alpha)}$ and $\alpha\in\G_0\setminus\T_{\G}$. We will now compute a vector space basis for $v\L v$, where $v$ is the vertex in the induced quiver associated to the polygon $V$.\\

For $\alpha\in\D_{\G}$ fixed let $V_{i_1}<\cdots<V_{i_{\val(\alpha)}}$ be its successor sequence, then in the quiver $\Q$ we have a sequence of arrows

\begin{equation}\label{002}
v_{i_1}\stackrel{a^{(\alpha)}_{j_1}}{\longrightarrow} v_{i_2} \stackrel{a^{(\alpha)}_{j_2}}{\longrightarrow} \cdots \stackrel{a^{(\alpha)}_{j_{\val(\alpha)-1}}}{\longrightarrow}v_{i_{\val(\alpha)}}\stackrel{a^{(\alpha)}_{j_{\val(\alpha)}}}{\longrightarrow} v_{i_1}.
\end{equation}

This sequence can be represented in a cyclic way. The sequentially composition of these arrows is a oriented cycle in $\Q$. Let's put the arrows and vertices in a cyclic drawing, like this.
\begin{figure}[H]
\centering
 \begin{pspicture}(-2.8,-2.8)(2.8,2.8)
  \psarc[linewidth=1.1pt]{->}(0,0){2}{-110}{-90}
  \psarc[linewidth=1.1pt]{->}(0,0){2}{-90}{-70}
  \psarc[linewidth=1.1pt]{->}(0,0){2}{-70}{-50}
  \psarc[linewidth=0.7pt,linestyle=dashed]{->}(0,0){2}{-50}{40}
  \psarc[linewidth=1.1pt]{->}(0,0){2}{40}{60}
  \psarc[linewidth=1.1pt]{->}(0,0){2}{60}{80}
  \psarc[linewidth=0.7pt,linestyle=dashed]{->}(0,0){2}{80}{250}
  \rput[t](0.347,-1.970){\footnotesize$a^{(\alpha)}_{j_1}$}
  \rput[tl](1,-1.732){\footnotesize$a^{(\alpha)}_{j_2}$}
  \rput[t](-0.347,-1.970){\footnotesize$a^{(\alpha)}_{j_{\val(\alpha)}}$}
  \rput[bl](1.285,1.532){\footnotesize$a^{(\alpha)}_{j_{l}}$}
  \rput[bl](0.684,1.879){\footnotesize$a^{(\alpha)}_{j_{l+1}}$}
  \rput[b](0,-2){\footnotesize$v_{i_1}$}
  \rput[br](0.684,-1.879){\footnotesize$v_{i_2}$}
  \rput[br](1.285,-1.532){\footnotesize$v_{i_3}$}
  \rput[tr](1.532,1.285){\footnotesize$v_{i_l}$}
  \rput[tr](1,1.732){\footnotesize$v_{i_{l+1}}$}
 \end{pspicture}
 \caption{}\label{fig3}
\end{figure}
Now for a polygon $V\in\G_1$ we suppose that $\occ(\alpha,V)>1$ then there are indices $l_1,\ldots,l_{\occ(\alpha,V)}$ such that $v=v_{i_{l_t}}$ for each $1\le t\le\occ(\alpha,V)$, and where $v$ is the vertex in $\Q$ associated to $V$. From Figure \ref{fig3} we are going to derive another one. So, in Figure \ref{fig3} we delete the vertices that are not equal to $v$, as also the arrows. We obtain the following figure.
\begin{figure}[H]
\centering
 \begin{pspicture}(-2.8,-2.8)(2.8,2.8)
  \psarc[linewidth=1.1pt]{->}(0,0){2}{-110}{-75}
  \psarc[linewidth=1.1pt]{->}(0,0){2}{-75}{-45}
  \psarc[linewidth=1.1pt]{->}(0,0){2}{-45}{5}
  \psarc[linewidth=0.7pt,linestyle=dashed]{->}(0,0){2}{5}{110}
  \psarc[linewidth=1.1pt]{->}(0,0){2}{110}{140}
  \psarc[linewidth=0.7pt,linestyle=dashed]{->}(0,0){2}{140}{250}
  \rput[br](0.518,-1.932){\small$v$}
  \rput[br](1.414,-1.414){\small$v$}
  \rput[r](1.992,0.174){\small$v$}
  \rput[tl](-0.684,1.879){\small$v$}
  \rput[tl](-1.532,1.285){\small$v$}
 \end{pspicture}
 \caption{}\label{fig5}
 \end{figure}

Now fix one of the vertices $v$, any vertex you want, and label it as \textit{1st} $v$; in counter clockwise consider the next one $v$ and label it as \textit{2nd} $v$, and then the next one and label it as \textit{3rd} $v$, and so on. At each occurrence of these labeled $v$'s we define two types of oriented cycles.

\begin{itemize}
 \item At the \textit{1st} $v$ we have $C^{(\alpha,v)}_1$ as the special $\alpha$-cycle at $v$ corresponding to the  \textit{1st} $v$, and $q^{(\alpha,v)}_1$, the oriented cycle obtained by composing all the arrows between the \textit{1st} $v$ and the \textit{2nd} $v$.
 \item At the \textit{2nd} $v$ we have $C^{(\alpha,v)}_2$ as the special $\alpha$-cycle at $v$ corresponding to the \textit{2nd} $v$, and $q^{(\alpha,v)}_2$, the oriented cycle obtained by composing all the arrows between the \textit{2nd} $v$ and the \textit{3rd} $v$.
 \item And so on $,\ldots$
\end{itemize}

What we obtain after this finite procedure is the following figure.
\begin{figure}[H]
\centering
 \begin{pspicture}(-2.8,-2.8)(2.8,2.8)
  \psarc[linewidth=1.1pt]{->}(0,0){2}{-110}{-75}
  \psarc[linewidth=1.1pt]{->}(0,0){2}{-75}{-45}
  \psarc[linewidth=1.1pt]{->}(0,0){2}{-45}{5}
  \psarc[linewidth=0.7pt,linestyle=dashed]{->}(0,0){2}{5}{110}
  \psarc[linewidth=1.1pt]{->}(0,0){2}{110}{140}
  \psarc[linewidth=0.7pt,linestyle=dashed]{->}(0,0){2}{140}{250}
  \rput[t](-0.087,-1.998){\footnotesize$q^{(\alpha,v)}_{\occ(\alpha,V)}$}
  \rput[tl](1,-1.732){\footnotesize$q^{(\alpha,v)}_{1}$}
  \rput[tl](1.879,-0.684){\footnotesize$q^{(\alpha,v)}_{2}$}
  \rput[b](-1.147,1.638){\footnotesize$q^{(\alpha,v)}_{r}$}
  \rput[br](-0.684,-1.879){\tiny$C^{(\alpha,v)}_{\occ(\alpha,V)}$}
  \rput[b](0.518,-1.932){\tiny$C^{(\alpha,v)}_{1}$}
  \rput[b](1.414,-1.414){\tiny$C^{(\alpha,v)}_{2}$}
  \rput[r](1.992,0.174){\tiny$C^{(\alpha,v)}_{3}$}
  \rput[tl](-0.684,1.879){\tiny$C^{(\alpha,v)}_{r}$}
  \rput[tl](-1.532,1.285){\tiny$C^{(\alpha,v)}_{r+1}$}
 \end{pspicture}
 \caption{}\label{fig4}
\end{figure}

For $\alpha\in\D_{\G}$ and $V\in\V_{(\alpha)}$ such that $\occ(\alpha,V)>1$, we define the \textit{non-special} $\alpha$-\textit{cycles at} $v$ to be the oriented cycles $q^{(\alpha,v)}_1,\ldots,q^{(\alpha,v)}_{\occ(\alpha,V)}$ in $\Q$. We denote these collection by $\neg\C^{\,v}_{(\alpha)}$. From the same construction we see that $|\C^{\,v}_{(\alpha)}|=|\neg\C^{\,v}_{(\alpha)}|=\occ(\alpha,V)$. When $\occ(\alpha,V)=1$ we say that $\neg\C^{\,v}_{(\alpha)}=\emptyset$. We have the following properties.\\

\begin{propo}\label{pr001}
 Let $\G$ be a Brauer configuration with induced quiver $\Q$ and $\alpha\in\D_{\G}$. $V\in\V_{(\alpha)}$ such that $\occ(\alpha,V)>1$, and $v$ the vertex in $\Q$ associated to the polygon $V$. Then reducing all the subindices modulo $\occ(\alpha,V)$ we have
 \begin{enumerate}
  \item\label{pr001.1} $C^{(\alpha,v)}_j=q^{(\alpha,v)}_j\cdots q^{(\alpha,v)}_{j+\occ(\alpha,V)-1},\,\,\forall1\le j\le\occ(\alpha,V).$
  \item\label{pr001.2} $\left(C^{(\alpha,v)}_j\right)^lq^{(\alpha,v)}_j=q^{(\alpha,v)}_j\left(C^{(\alpha,v)}_{j+1}\right)^l,\,\forall l\ge0;\,\forall1\le j\le\occ(\alpha,V).$
  \item\label{pr001.3} $\bigcup_{k=1}^{\occ(\alpha,V)-1}\left\{\,q^{(\alpha,v)}_l\cdots q^{(\alpha,v)}_{l+k-1}\,|\,1\le l\le\occ(\alpha,V)\,\right\}\subset v\Q v$, where $v\Q v$ is the collection of all the oriented cycles in $\Q$ with initial and final vertex $v$.
 \end{enumerate}
\end{propo}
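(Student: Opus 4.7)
The plan is to read all three parts directly off the cyclic picture (Figure~\ref{fig4}) obtained from the successor sequence of $\alpha$ restricted to the $\occ(\alpha,V)$ occurrences of the polygon $V$. There is no deep step here; the real difficulty is bookkeeping, namely keeping the cyclic indexing modulo $\occ(\alpha,V)$ consistent and correctly identifying which sub-arcs of \eqref{002} are $q^{(\alpha,v)}_j$ and which are $C^{(\alpha,v)}_j$.

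For part (\ref{pr001.1}) I would argue straight from the construction of the non-special $\alpha$-cycles at $v$: by definition, $q^{(\alpha,v)}_j$ is the oriented subpath of the cycle \eqref{002} that goes from the $j$-th occurrence of $v$ to the $(j+1)$-th occurrence of $v$, indices taken mod $\occ(\alpha,V)$. Concatenating these subpaths for $j, j+1, \ldots, j+\occ(\alpha,V)-1$ therefore traverses the full cycle \eqref{002} exactly once, starting and ending at the $j$-th $v$; but that is precisely the definition of the special $\alpha$-cycle $C^{(\alpha,v)}_j$. So part (\ref{pr001.1}) holds.

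For part (\ref{pr001.2}) I would first handle the case $l=1$ and then induct. Using part (\ref{pr001.1}) together with the cyclic convention $q^{(\alpha,v)}_{j+\occ(\alpha,V)} = q^{(\alpha,v)}_j$, one has
\[
C^{(\alpha,v)}_j\, q^{(\alpha,v)}_j \;=\; q^{(\alpha,v)}_j q^{(\alpha,v)}_{j+1} \cdots q^{(\alpha,v)}_{j+\occ(\alpha,V)-1}\, q^{(\alpha,v)}_j \;=\; q^{(\alpha,v)}_j\, C^{(\alpha,v)}_{j+1},
\]
the last equality again by part (\ref{pr001.1}) applied at index $j+1$. Iterating this commutation identity one step at a time,
\[
(C^{(\alpha,v)}_j)^l q^{(\alpha,v)}_j \;=\; (C^{(\alpha,v)}_j)^{l-1} q^{(\alpha,v)}_j C^{(\alpha,v)}_{j+1} \;=\; \cdots \;=\; q^{(\alpha,v)}_j (C^{(\alpha,v)}_{j+1})^l,
\]
which gives the general statement by a straightforward induction on $l$ (the base case $l=0$ being trivial).

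For part (\ref{pr001.3}), since each $q^{(\alpha,v)}_l$ is by construction an oriented path in $\Q$ from $v$ to $v$, any composition of the form $q^{(\alpha,v)}_l q^{(\alpha,v)}_{l+1} \cdots q^{(\alpha,v)}_{l+k-1}$ is a well-defined oriented path from $v$ to $v$, hence belongs to $v\Q v$. The main obstacle across all three items is purely notational: one must be careful that when $k$ wraps past $\occ(\alpha,V)-1$ the resulting product coincides (by part (\ref{pr001.1})) with a power of the special cycle, which is why the union in part (\ref{pr001.3}) stops at $k=\occ(\alpha,V)-1$.
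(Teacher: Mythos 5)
Your proposal is correct and follows exactly the route the paper intends: the paper's entire proof is the one-line remark that the claims ``follow using Figure~\ref{fig4},'' and your argument is simply the explicit bookkeeping behind that figure --- identifying each $q^{(\alpha,v)}_j$ as the arc between consecutive occurrences of $v$, concatenating them to recover $C^{(\alpha,v)}_j$, and deriving the commutation in part~(\ref{pr001.2}) by iterating the $l=1$ case. Nothing is missing; you have just written out what the paper leaves to the reader.
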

\begin{proof}
 It follows using the Figure \ref{fig4}.
\end{proof}

Given a Brauer configuration $\G=(\G_0,\G_1,\mu,\o)$ we say that the polygon $V\in\G_1$ is a \textit{$d$-gon} if the number of vertices appearing in $V$ is $d$. We say that the configuration $\G$ is \textit{reduced} if and only if every polygon $V\in\G_1$ satisfies one of the following conditions:
\begin{enumerate}[(1)]
 \item $V\cap\T_{\G}=\emptyset$.
 \item If $V\cap\T_{\G}\neq\emptyset$, then $V$ is a $2$-gon with only one truncated vertex.
\end{enumerate}

Let $\L=K\Q/I$ be the Brauer configuration algebra associated to a reduced Brauer configuration $\G$. Denote by $\pi:K\Q\to\L$ the canonical surjection. When no confusion may arise we denote $\pi(x)$ by $\overline{x}$, for $x \in K\Q$. From the definition of relations of type one in $K\Q$ we see that for a $V\in\G_1$ and any $\alpha,\beta\in V\setminus \T_{\G}$\[\overline{C}^{\mu(\alpha)}=\overline{D}^{\mu(\beta)},\,\forall C\in\C^{\,v}_{(\alpha)},\,\forall D\in\C^{\,v}_{(\beta)}.\] So, let $\alpha$ be any nontruncated vertex in $V$, and let $C\in\C^{\,v}_{(\alpha)}$. We denote by $C^{(V)}$ any representative of the equivalence class \[\left\{C^{\mu(\alpha)}+x\,|\,x\in I_{\G}\right\}.\] Then, if $\beta\in V$ is another nontruncated vertex we have \begin{equation}\label{009}C^{(V)}=\overline{D}^{\mu(\beta)},\,\forall D\in\C_{(\beta)}^{\,v}.\end{equation} We observe that the definition of $C^{(V)}$ depends only on the polygon $V$.\\

Now we calculate an explicit $K$-basis for the vectorial space $v\L v$ where $v$ is the vertex in $\Q$ associated to the polygon $V\in\G_1$.

\begin{propo}\label{pr003}
 Let $\L=K\Q/I$ be the Brauer configuration algebra induced by $\G$. Then for any $V\in\G_1$ \[\textrm{dim}_Kv\L v=2+\sum_{\alpha\in \overline{V}}\occ(\alpha,V)(\occ(\alpha,V)\mu(\alpha)-1),\] where $\overline{V}=V\cap\G_0$. 
\end{propo}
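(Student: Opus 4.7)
The plan is to produce an explicit $K$-basis of $v\L v$ by intersecting the known $K$-basis of $\L$ with the set of paths in $\Q$ starting and ending at $v$, and then counting it. By \cite[Proposition 3.3]{brau} a $K$-basis of $\L$ consists of the images of the prefixes of $C^{\mu(\alpha)}$ as $\alpha$ ranges over $\G_0\setminus\T_{\G}$ and $C$ ranges over $\CC_{(\alpha)}$, with the convention that the full powers $C^{\mu(\alpha)}$ at a given polygon $V$ are all identified by relations of type one into the single socle element $C^{(V)}$.

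For the polygon $V$ with associated quiver vertex $v$, I would propose as basis of $v\L v$ the two elements $e_v$ and $z_V:=C^{(V)}$, together with, for every non-truncated $\alpha\in V$ and every pair $(j,k)$ with $1\le j\le \occ(\alpha,V)$ and $1\le k\le \occ(\alpha,V)\mu(\alpha)-1$, the cycle
\[
P^{(\alpha,v)}_{j,k}\;:=\;\overline{q^{(\alpha,v)}_{j}\,q^{(\alpha,v)}_{j+1}\cdots q^{(\alpha,v)}_{j+k-1}},
\]
indices read modulo $\occ(\alpha,V)$. These are manifestly cycles at $v$: by Proposition~\ref{pr001}(\ref{pr001.1}) each block of $\occ(\alpha,V)$ consecutive $q^{(\alpha,v)}_\bullet$'s composes to a special $\alpha$-cycle at $v$, and the chosen range of $k$ stops one step short of the full power $(C^{(\alpha,v)}_j)^{\mu(\alpha)}=z_V$, which is already listed separately.

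To see this list is a basis, one reads off Figure~\ref{fig4}: an initial subpath of $(C^{(\alpha,v)}_j)^{\mu(\alpha)}$ of strictly positive length smaller than $\occ(\alpha,V)\mu(\alpha)$ has target $v$ if and only if it stops at one of the labelled $v$'s, and these stopping points yield precisely the $P^{(\alpha,v)}_{j,k}$. Linear independence reduces to distinctness of paths in $\Q$: by Proposition~\ref{pr005} each arrow belongs to a unique $\CC_{(\alpha)}$, so different choices of $\alpha$ yield disjoint sets of paths; for fixed $\alpha$, the first arrow of $P^{(\alpha,v)}_{j,k}$ is determined by $j$ and its total length by $k$. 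The only identifications imposed by $I$ come from relations of type one, which act exclusively at the socle level, so the $P^{(\alpha,v)}_{j,k}$ with $k<\occ(\alpha,V)\mu(\alpha)$ remain pairwise distinct in $\L$.

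Summing the sizes of the three groups gives
\[
\dim_K v\L v\;=\;2+\sum_{\alpha\in V\setminus\T_{\G}}\occ(\alpha,V)\bigl(\occ(\alpha,V)\mu(\alpha)-1\bigr).
\]
For any truncated $\alpha\in V$, reducedness forces $\occ(\alpha,V)=\mu(\alpha)=\val(\alpha)=1$, so the corresponding summand vanishes and the sum may be extended to $\overline{V}=V\cap\G_0$, giving the stated formula. The main obstacle I anticipate is the independence check across different starting indices $j$ for the same $\alpha$: the conjugation identity of Proposition~\ref{pr001}(\ref{pr001.2}) might superficially suggest additional relations, so one must argue carefully that relations of type one affect only the full powers $C^{\mu(\alpha)}$ and never their proper initial subpaths.
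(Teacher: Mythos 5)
Your proof is correct and follows essentially the same route as the paper: both exhibit the explicit $K$-basis of $v\L v$ consisting of $\overline{v}$, $C^{(V)}$, and the proper partial cycles built from the non-special $\alpha$-cycles $q^{(\alpha,v)}_{\bullet}$ (justified via \cite[Proposition 3.3]{brau} and Proposition \ref{pr001}), and then count. Your uniform indexing by pairs $(j,k)$ merely repackages the paper's three families $\mathscr{L}_1,\mathscr{L}_2,\mathscr{L}_3$ into the single count $\occ(\alpha,V)(\occ(\alpha,V)\mu(\alpha)-1)$ per non-truncated vertex of $V$, with the harmless convention $q^{(\alpha,v)}_1=C^{(\alpha,v)}_1$ when $\occ(\alpha,V)=1$.
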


\begin{proof}
 For the polygon $V\in\G_1$ define the following sets of vertices \begin{eqnarray*} \hat{V} & = & V\cap(\A_{\G}\cup\C_{\G}),\\ \A_{\G}^{V>1} & = & \left\{\,\alpha\in V\cap\A_{\G}\,|\,\occ(\alpha,V)>1\,\right\},\\ \B_{\G}^{V>1} & = & \left\{\,\beta\in V\cap\B_{\G}\,|\,\occ(\beta,V)>1\,\right\}.\end{eqnarray*} Now in the induced Brauer configuration algebra $\L$ define the following subsets of the $K$-space $v\L v${\footnotesize\begin{eqnarray*}\mathscr{L}_1 & = & \bigcup_{\alpha\in\hat{V}}\left\{\,\overline{C}^j\,|\,C\in\C^{\,v}_{(\alpha)},1\le j\le\mu(\alpha)-1\,\right\},\\ \mathscr{L}_2 & = & \bigcup_{\alpha\in\A_{\G}^{V>1}}\left(\bigcup_{k=1}^{\occ(\alpha,V)-1}\left\{\,\left(\overline{C^{(\alpha,v)}_l}\right)^j\overline{q^{(\alpha,v)}_l}\cdots\overline{q^{(\alpha,v)}_{l+k-1}}\,|\,0\le j\le\mu(\alpha)-1,1\le l\le\occ(\alpha,V)\,\right\}\right),\\ \mathscr{L}_3 & = & \bigcup_{\beta\in\B_{\G}^{V>1}}\left\{\,\overline{q^{(\beta,v)}_{l}}\cdots\overline{q^{(\beta,v)}_{l+k-1}}\,|\,1\le l\le\occ(\beta,V),1\le k\le\
occ(\beta,V)-1\,\right\}.\end{eqnarray*} } By using Proposition \ref{pr001} we have that the set $\mathscr{L}_1\cup\mathscr{L}_2\cup\mathscr{L}_3\cup\left\{\,\overline{v},C^{(V)}\,\right\}$ is a $K$-basis of $v\L v$. From the same definition of the collections $\mathscr{L}_i$'s, we see that {\small\begin{eqnarray}|\mathscr{L}_1| & = & \sum_{\alpha\in V\cap\A_{\G}}\occ(\alpha,V)(\mu(\alpha)-1)+\sum_{\gamma\in V\cap\C_{\G}}(\mu(\gamma)-1),\label{003}\\|\mathscr{L}_2| & = & \sum_{\alpha\in\A^{V>1}_{\G}}\occ(\alpha,V)(\occ(\alpha,V)-1)\mu(\alpha),\label{004}\\ |\mathscr{L}_3| & = & \sum_{\beta\in\B^{V>1}_{\G}}\occ(\beta,V)(\occ(\beta,V)-1)\label{005}\end{eqnarray}} but the expresions in (\ref{004}) and (\ref{005}) are respectively equal to {\small\begin{equation}\label{006}\sum_{\alpha\in V\cap\A_{\G}}\occ(\alpha,V)(\occ(\alpha,V)-1)\mu(\alpha),\end{equation}\begin{equation}\label{007}\sum_{\beta\in V\cap\B_{\G}}\occ(\beta,V)(\occ(\beta,V)-1).\end{equation}} Finally using the expressions (\ref{003}), (\ref{006}) 
and (\ref{007}) we obtain {\footnotesize\begin{multline*}\textrm{dim}_Kv\L v=2+\sum_{\alpha\in V\cap\A_{\G}}\occ(\alpha,V)(\mu(\alpha)-1)+\sum_{\gamma\in V\cap\C_{\G}}(\mu(\gamma)-1)\\+\sum_{\alpha\in V\cap\A_{\G}}\occ(\alpha,V)(\occ(\alpha,V)-1)\mu(\alpha)+\sum_{\beta\in V\cap\B_{\G}}\occ(\beta,V)(\occ(\beta,V)-1)\end{multline*}}{\scriptsize\begin{equation*}=2+\sum_{\alpha\in V\cap\A_{\G}}\occ(\alpha,V)(\occ(\alpha,V)\mu(\alpha)-1)+\sum_{\beta\in V\cap\B_{\G}}\occ(\beta,V)(\occ(\beta,V)-1)+\sum_{\gamma\in V\cap\C_{\G}}(\mu(\gamma)-1)\end{equation*}}{\footnotesize\begin{eqnarray} & = & 2+\sum_{\alpha\in V\cap(\A_{\G}\cup\B_{\G}\cup\C_{\G})}\occ(\alpha,V)(\occ(\alpha,V)\mu(\alpha)-1)\nonumber\\ & = &2+\sum_{\alpha\in V\cap(\G_0\setminus\T_{\G})}\occ(\alpha,V)(\occ(\alpha,V)\mu(\alpha)-1)\label{008}\end{eqnarray}} Now if $\alpha\in V$ is such that $\alpha\in\T_{\G}$ then $\val(\alpha)=\occ(\alpha,V)=1$ so $\occ(\alpha,V)\mu(\alpha)=1$, then the expression in (\ref{008}) remains equal to \[2+\sum_{\alpha\in V\cap\G_0}\occ(\alpha,V)(\occ(\alpha,V)\mu(\alpha)-1).\]
\end{proof}

\subsection{Induced Boundary Maps}\label{subsecc00b}

Let $\L$ be a finite dimensional $K$-algebra (not necessarily a Brauer configuration algebra), and $\L^e=\L^{\textrm{op}}\otimes_K\L$ the enveloping algebra of $\L$. Consider a $\L^e$-projective resolution of $\L$ \begin{equation}\label{p3}P\,:\,\cdots P_{n+1}\stackrel{d_{n+1}}{\longrightarrow}P_{n}\stackrel{d_n}{\longrightarrow}\cdots\stackrel{d_2}{\longrightarrow}P_1\stackrel{d_1}{\longrightarrow}P_0\stackrel{\pi}{\longrightarrow}\L\longrightarrow0\end{equation} Applying the functor $\hom_{\L^e}(-,\L)$ to $P$ in (\ref{p3}) we obtain the complex \[0\longrightarrow\hom_{\L^e}(P_0,\L)\stackrel{\widehat{d}_{1}}{\longrightarrow}\hom_{\L^e}(P_1,\L)\stackrel{\widehat{d}_{2}}{\longrightarrow}\cdots\stackrel{\widehat{d}_{n}}{\longrightarrow}\hom_{\L^e}(P_n,\L)\xrightarrow{\widehat{d}_{n+1}}\cdots,\] where $\hat{d}_{n}$ is de boundary map induced by $d_n$ for all $n\ge1$. From this complex  the $n$-th Hochschild cohomology group of $\L$ is defined by HH$^n(\L)=\textrm{ker}\hat{d}_{n+1}
/\textrm{im}\hat{d}_n$. We see that HH$^n(\L)=\textrm{Ext}_{\L^e}^n(\L,\L)$.\\

We are not interested, at the moment, in a complete $\L^e$-projective resolution of $\L$. In fact, we are interested in just a small segment of a particular projective resolution of $\L$, where $\L=K\Q/I$ with $\Q$ a finite quiver and $I$ an admissible ideal of $K\Q$. For this we use the construction in \cite{secnd}. Let's consider the following segment of the minimal $\L^e$-projective resolution of $\L=K\Q/I$ \begin{equation}\label{p4}P_1\stackrel{d_1}{\longrightarrow}P_0\stackrel{g}{\longrightarrow}\L\longrightarrow0,\end{equation} and where the $\L$-bimodules $P_0,P_1$ are given by \begin{eqnarray*}P_0 & = & \coprod_{v\in\Q_0}\L v\otimes v\L,\\ P_1 & = & \coprod_{a\in\Q_1}\L s(a)\otimes t(a)\L,\end{eqnarray*} and the $\L$-bimodule homomorphisms $g$ and $d_1$ are defined by
\[\begin{array}{ccc}P_0 & \stackrel{g}{\longrightarrow} & \L\\ v\otimes_v v & \longmapsto & v\end{array},\] \[\begin{array}{ccc}P_1 & \stackrel{d_1}{\longrightarrow} & P_0\\ s(a)\otimes_a t(a) & \longmapsto &  s(a)\otimes_{s(a)}\bar{a}-\bar{a}\otimes_{t(a)} t(a)\end{array}\] 

Here $\alpha_1\otimes_v\beta_1$ means the element in $P_0$ which has all its entries equal to zero except the $v$-entry which is equal to $\alpha_1\otimes\beta_1$. We have the same meaning for $P_1$, i.e, $\alpha_2\otimes_a\beta_2$ denotes the element in $P_1$ which has all its entries equal to zero except the $a$-entry which is equal to $\alpha_2\otimes\beta_2$.

Applying Hom$_{\L^e}(-,\L)$ to the complex in (\ref{p4}) we obtain  \begin{equation}\label{p5}0\longrightarrow\hom_{\L^e}(P_0,\L)\stackrel{\hat{d}_1}{\longrightarrow}\hom_{\L^e}(P_1,\L).\end{equation} It is well known that  HH$^0(\L)=\textrm{ker}\hat{d}_1=Z(\L)$, where $Z(\L)$ denotes the center of $\L$. We are not going to make the calculation of ker$\hat{d}_1$ directly from the homomorphism $\hat{d}_1$. We use the ideas in \cite{indbo} and \cite{nong} to do this.

We know that there exist natural isomorphisms of vector spaces \[\hom_{\L^e}(P_0,\L)\cong\coprod_{v\in\Q_0}v\L v,\]\[\hom_{\L^e}(P_1,\L)\cong\coprod_{a\in\Q_1}s(a)\L t(a).\] Let $v$ be a vertex  in the quiver $\Q$, $R_0=\coprod_{v\in\Q_0}v\L v$ and let $\pi'_v:R_0\to v\L v$ be the respective natural projection onto the component $v$. For $\chi\in R_0$ we denote by $\chi_v$ the element in $v\L v$ given by \begin{equation}\label{042}\chi_v:=\pi'_v(\chi).\end{equation} Now we give the morphism of vector spaces\[d_1^*:\coprod_{v\in\Q_0}v\L v\longrightarrow\coprod_{a\in\Q_1}s(a)\L t(a)\]
defined by \[d_1^*(\chi)=(\chi_{s(a)}\bar{a}-\bar{a}\chi_{t(a)})_{a\in\Q_1},\] for any $\chi\in\coprod_{v\in\Q_0}v\L v$.

It is not difficult to prove that the complex in (\ref{p5}) can be identified with the complex \begin{equation}\label{p6}0\longrightarrow\coprod_{v\in\Q_0}v\L v\stackrel{d_1^*}{\longrightarrow}\coprod_{a\in\Q_1}s(a)\L t(a),\end{equation} then it follows that $Z(\L)\cong\textrm{ker}d_1^*$. So, we will work with the morphism of $k$-spaces $d_1^*$, instead of $\hat{d}_1$, which is easier to handle.

\section{Dimension of the center of Brauer configuration algebras.}
\subsection{Characteristic elements.}\label{subsecc00d}

For each $\alpha\in\G_0\setminus\T_{\G}$ of the Brauer configuration $\G$ we denote by $C(\alpha)$ the element in the induced Brauer configuration algebra $\L$ given by \begin{equation}\label{041}C(\alpha):=\sum_{C\in\CC_{(\alpha)}}\overline{C}.\end{equation} Remember that for a polygon $V\in\G_1$ we denote by $C^{(V)}$ the element in $\L$ defined by the property in (\ref{009}).
\begin{propo}\label{pr002}
Let $\L$ be the algebra induced by the Brauer configuration $\G$. Then
\begin{enumerate}
 \item\label{pr002.1} $\left\{\,C^{(V)}\,|\,V\in\G_1\,\right\}\subset\textrm{soc}_{\L^{e}}\L$.
 \item\label{pr002.2} For $\alpha\in\D_{\G}$ let $C,D\in\CC_{(\alpha)}$ be special cycles.
 \begin{enumerate}
  \item\label{pr002.2.a} If $C\neq D$ then $\overline{C}\,\overline{D}=\overline{D}\,\overline{C}=0$.
  \item\label{pr002.2.b} If $C=D$ and $\alpha\in\B_{\G}$ then $\overline{C}^2=0$.
 \end{enumerate}
 \item\label{pr002.3} For any $\alpha\in\G_0\setminus\T_{\G}$, 
 \begin{enumerate}
 \item\label{pr002.5} $C(\alpha)^{\mu(\alpha)}=\sum\limits_{V\in\V_{(\alpha)}}\occ(\alpha,V)C^{(V)}$;
 \item\label{pr002.4} $\overline{C}^{\mu(\alpha)+1}=0$, for all $C\in\CC_{(\alpha)}$.
\end{enumerate}
\end{enumerate}
\end{propo}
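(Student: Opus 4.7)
The plan is to attack each of the five subclaims directly from the three families of defining relations (type one, two, and three) together with Proposition~\ref{pr005}, which guarantees that every arrow of $\Q$ belongs to the special cycles of a unique nontruncated vertex. No deep machinery should be needed; the whole proposition is essentially a careful bookkeeping exercise about how arrows can or cannot be concatenated in a special cycle.

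I would first dispose of the two monomial identities in parts (\ref{pr002.2.b}) and (\ref{pr002.4}), since both are immediate. For (\ref{pr002.4}), write $\overline{C}^{\mu(\alpha)+1}=\overline{C}^{\mu(\alpha)}\cdot f(C)\cdot\overline{C'}$, where $C'$ is the path obtained from $C$ by removing its first arrow; the leftmost factor $C^{\mu(\alpha)}f(C)$ is a type-two relator, so it vanishes in $\L$. For (\ref{pr002.2.b}), use $\mu(\alpha)=1$ (since $\alpha\in\B_{\G}$) to reduce the same identity to $\overline{C}^{2}=0$.

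Next I would treat (\ref{pr002.2.a}). Let $C=a_{1}\cdots a_{n}$ and $D=b_{1}\cdots b_{n}$ with $n=\val(\alpha)$. If $s(C)\ne s(D)$ the product $\overline{C}\,\overline{D}$ is zero by orthogonality of the vertex idempotents, so assume both cycles start at the same vertex. Then $D$ is a non-trivial cyclic rotation of $C$, so $b_{1}\ne a_{1}$. In the concatenated word $a_{1}\cdots a_{n}b_{1}\cdots b_{n}$ the subpath $a_{n}b_{1}$ appears; by Proposition~\ref{pr005} every special cycle in which $a_{n}$ occurs is a cyclic permutation of $C$ in $\CC_{(\alpha)}$, and in all of them $a_{n}$ is followed by $a_{1}$. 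Hence $a_{n}b_{1}$ is not a subpath of any special cycle and a type-three relation kills $\overline{C}\,\overline{D}$; the other product is analogous. With (\ref{pr002.2.a}) in hand, part (\ref{pr002.5}) is immediate: expanding $C(\alpha)^{\mu(\alpha)}=\bigl(\sum_{C\in\CC_{(\alpha)}}\overline{C}\bigr)^{\mu(\alpha)}$ only the diagonal terms $\overline{C}^{\mu(\alpha)}$ survive; grouping these by the polygon $V$ on which $C$ is based and invoking $|\C^{\,v}_{(\alpha)}|=\occ(\alpha,V)$ together with the identification $\overline{C}^{\mu(\alpha)}=C^{(V)}$ from~(\ref{009}) yields the claimed formula.

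The remaining item (\ref{pr002.1}) is the only one requiring some care. Let $C^{(V)}=\overline{C}^{\mu(\alpha)}$ for some $\alpha\in V\setminus\T_{\G}$ and $C\in\C^{\,v}_{(\alpha)}$; we must show $\overline{a}\cdot C^{(V)}=0=C^{(V)}\cdot\overline{a}$ for every $a\in\Q_{1}$, since the Jacobson radical of $\L$ is generated by the arrows. For the right multiplication one again checks that the unique arrow $a$ for which the transition ``last arrow of $C^{\mu(\alpha)}$ followed by $a$'' forms a subpath of a special cycle is precisely $f(C)$ (by Proposition~\ref{pr005}), and then $C^{\mu(\alpha)}f(C)$ is a type-two relator, so it vanishes; every other choice of $a$ yields a forbidden transition and is killed by a type-three relation. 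The left multiplication is symmetric: the last arrow of $C^{\mu(\alpha)}$ becomes the arrow immediately preceding $f(C)$ in the special cycle, and the same Proposition~\ref{pr005} identifies the unique candidate arrow which, combined with the chosen representative in the equivalence class defining $C^{(V)}$, again produces a type-two relator. The only place one has to be careful is that $C^{(V)}$ is defined up to elements of $I_{\G}$ and a priori depends on the choice of $\alpha\in V\setminus\T_{\G}$; but (\ref{009}) says all representatives agree modulo $I_{\G}$, so it is enough to verify the socle condition on one representative, which is what the argument above does. I expect this bookkeeping about ``which arrow can follow which'' to be the only delicate point; everything else is a direct appeal to the relations.
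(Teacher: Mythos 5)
Your proposal is correct and follows essentially the same route as the paper: each item is deduced directly from the type one, two and three relations together with Proposition~\ref{pr005}, with part~\ref{pr002.5} obtained by expanding $C(\alpha)^{\mu(\alpha)}$ and discarding cross terms via part~\ref{pr002.2}. The only difference is one of detail, since the paper states most of these steps as immediate consequences of the relations while you spell out the bookkeeping (which arrow may follow which, and the cyclic-rotation argument reducing $\overline{a}\,\overline{C}^{\mu(\alpha)}$ to a type-two relator); your added detail is accurate.
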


\begin{proof}
 \ref{pr002.1}. It follows immediately from the definition of type two relations.\\
 
 \ref{pr002.2}. If a pair of two distinct elements from $\CC_{(\alpha)}$ are cycles starting at different vertices, then their respective product in $\L$ must be zero. Now if $C,D\in\C_{(\alpha)}^{\,v}$ for some $V\in\V_{(\alpha)}$, and $C\neq D$, then their product $CD$ must contain a relation of type three and $\overline{C}\,\overline{D}=0$ in $\L$. We have the same for the product $DC$. If $\alpha\in\B_{\G}$ and $C$ is any cycle in $\CC_{(\alpha)}$ we have that $C^2$ contais a relation of type two, i.e, $\overline{C}^2=0$ in $\L$.\\
 
 \ref{pr002.5}. We see that if $\alpha\in\C_{\G}$ the affirmation holds. Now, if $\alpha\in\D_{\G}$ then using what we proved in \ref{pr002.2} and the property in (\ref{009}) we have
 {\small\begin{eqnarray*}C(\alpha)^{\mu(\alpha)} & = & \sum_{C\in\CC_{(\alpha)}}\overline{C}^{\mu(\alpha)}\\ & = & \sum_{V\in\V_{(\alpha)}}\left(\sum_{C\in\C_{(\alpha)}^{\,v}}\overline{C}^{\mu(\alpha)}\right)\\ & = & \sum_{V\in\V_{(\alpha)}}\left(\sum_{C\in\C_{(\alpha)}^{\,v}}C^{(V)}\right)\\ & = & \sum_{V\in\V_{(\alpha)}}\left(\left(\sum_{C\in\C_{(\alpha)}^{\,v}}1_K\right)C^{(V)}\right)\\ & = & \sum_{V\in\V_{(\alpha)}}\occ(\alpha,V)C^{(V)}.\end{eqnarray*}
 }
 
 \ref{pr002.4}. It follows from the definition of type two relations.
\end{proof}
\begin{obs}\label{ob02}
Using the statement \ref{pr002.2} of Proposition \ref{pr002} we have that for any $\alpha\in\G_0\setminus\T_{\G}$ and any $j\ge1$ \begin{equation}\label{010}C(\alpha)^j=\left(\sum_{C\in\CC_{(\alpha)}}\overline{C}\right)^j=\sum_{C\in\CC_{(\alpha)}}\overline{C}^j.\end{equation}
\end{obs}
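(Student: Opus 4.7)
The plan is to expand the $j$-th power of $C(\alpha)$ directly and to use Proposition \ref{pr002}.\ref{pr002.2} to kill almost every term. Writing
\[
C(\alpha)^j = \left(\sum_{C\in\CC_{(\alpha)}}\overline{C}\right)^j = \sum_{(C_1,\ldots,C_j)\in\CC_{(\alpha)}^j}\overline{C_1}\,\overline{C_2}\cdots\overline{C_j},
\]
we want to isolate the diagonal terms where $C_1=\cdots=C_j$.

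The key observation is that whenever the tuple $(C_1,\ldots,C_j)$ is non-constant, there exists some index $1\le i<j$ with $C_i\neq C_{i+1}$. Two distinct special $\alpha$-cycles in $\CC_{(\alpha)}$ belong either to different sets $\C_{(\alpha)}^{\,v}$ (and hence start at different vertices of $\Q$, so their product vanishes for the trivial composability reason $t(\overline{C_i})\neq s(\overline{C_{i+1}})$ in the path algebra) or to the same $\C_{(\alpha)}^{\,v}$ for some $V\in\V_{(\alpha)}$ (and then Proposition \ref{pr002}.\ref{pr002.2.a} gives $\overline{C_i}\,\overline{C_{i+1}}=0$ via a type three relation). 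Either way the interior factor $\overline{C_i}\,\overline{C_{i+1}}$ is zero in $\L$, so the entire product $\overline{C_1}\cdots\overline{C_j}$ vanishes.

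Thus only the constant tuples $(C,C,\ldots,C)$ contribute to the sum, and the result collapses to
\[
C(\alpha)^j = \sum_{C\in\CC_{(\alpha)}}\overline{C}^j,
\]
which is the claimed identity.

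There is no substantive obstacle: the argument is a direct unwinding of the multinomial expansion, with Proposition \ref{pr002}.\ref{pr002.2} supplying the pairwise orthogonality needed to discard the off-diagonal terms. The only mild point to verify is that orthogonality of distinct cycles holds also when they sit in different $\C_{(\alpha)}^{\,v}$, but that is immediate since such cycles have distinct source vertices in $\Q$.
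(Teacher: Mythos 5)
Your proposal is correct and matches the paper's intended justification: the observation is asserted directly from Proposition \ref{pr002}.\ref{pr002.2}, and your expansion of the $j$-th power into tuples, with all non-constant tuples killed by the pairwise orthogonality $\overline{C}\,\overline{D}=0$ for distinct $C,D\in\CC_{(\alpha)}$, is exactly the computation the paper leaves implicit. The remaining case $\alpha\in\C_{\G}$, where $\CC_{(\alpha)}$ is a singleton, is trivially covered by your argument since every tuple is then constant.
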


For $\alpha\in\G_0\setminus\T_{\G}$ let $\olCC_{(\alpha)}$ denote the subset of $\Q_1$ defined by \begin{equation}\label{2.0013}\olCC_{(\alpha)}:=\left\{a\in\Q_1\,|a\textrm{ is contained in a special }\alpha\textrm{-cycle}\right\}.\end{equation} We call the collection $\olCC_{(\alpha)}$ the \textit{set of arrows associated to} $\alpha$. Let $h:\D_{\G}\to\left\{0,1\right\}$ be the map defined by \begin{equation}\label{2.0018}h(\alpha)=\left\{\begin{array}{cl}0, &\textrm{ if }\olCC_{(\alpha)}\textrm{ contains no loops};\\ 1, &\textrm{ if }\olCC_{(\alpha)}\textrm{ contains loops}.\end{array}\right.\end{equation} It is easy to check that \begin{equation}\label{2.0019}\alpha\in h^{-1}(1)\iff\begin{array}{l}\exists V\in\V_{(\alpha)}\,;\,\occ(\alpha,V)>1\textrm{ and}\\\exists1\le s\le\occ(\alpha,V)\,;\, q^{(\alpha,v)}_s\in\Q_1.\end{array}\end{equation} For $\alpha\in h^{-1}(1)$ let $\M^{(\alpha)}$ be the set of ordered pairs defined by \begin{equation}\label{2.0020}\M^{(\alpha)}=\left\{(V,s)\,|\,V\textrm{ and }s\textrm{ satisfies }(\ref{2.0019})\right\}.\end{equation} Each pair $(V,s)\in\M^{(\alpha)}$ is associated to a unique loop in $\Q$, the non-special $\alpha$-cycle $q^{(\alpha,v)}_s$ in (\ref{2.0019}). For $(V,s)\in\M^{(\alpha)}$ let $D^{(\alpha)}_{V,s}$ be the cycle in $\Q$ defined by \begin{equation}\label{2.0021}D^{(\alpha)}_{V,s}=\left\{\begin{array}{cl}\left(C^{(\alpha,v)}_{s+1}\right)^{\mu(\alpha)-1}q^{(\alpha,v)}_{s+1}\cdots q^{(\alpha,v)}
_{s-1}, & \alpha\in\A_{\G};\\q^{(\alpha,v)}_{s+1}\cdots q^{(\alpha,v)}_{s-
1}, & \alpha\in\B_{\G}.\end{array}\right.\end{equation} In (\ref{2.0021}) the composition of cycles $q^{(\alpha,v)}_{s+1}\cdots q^{(\alpha,v)}_{s-1}$ means \[q^{(\alpha,v)}_{s+1}\cdots q^{(\alpha,v)}_{\occ(\alpha,V)}q^{(\alpha,v)}_{1}\cdots q^{(\alpha,v)}_{s-1}.\] We call $D^{(\alpha)}_{V,s}$ a \textit{central mixed} $\alpha$\textit{-cycle at the vertex} $v$ in $\Q$. We denote the collection of these cycles by $\Psi^{\,\,\,v}_{(\alpha)}$, i.e, \begin{equation}\label{2.0022}\Psi^{\,\,\,v}_{(\alpha)}=\left\{D^{(\alpha)}_{V,s}\,|\,(V,s)\in\M^{(\alpha)}\right\}.\end{equation} We recall that a path $p$ is a \textit{prefix of} the path $q$ in the quiver $\Q$ if the first arrow of $p$ coincides with the first arrow of $q$ and $p$ is a subpath of $q$. The following lemma is very useful.

\begin{lem}\label{2.lem002}
 Let $\L$ be a Brauer configuration algebra associated to the Brauer configuration $\G$. Let $x$ be an element in $\L$ that satisfies the following property\begin{quote}$\forall\alpha\in\G_0\setminus\T_{\G},\forall C\in\CC_{(\alpha)}$\\\[p\textrm{ is a prefix of }C^{\mu(\alpha)}\implies\overline{p}x=x\overline{p}.\]\end{quote} Then $x\in Z(\L)$.
\end{lem}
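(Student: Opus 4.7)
The plan is to reduce the statement $x\in Z(\L)$ to commutation of $x$ with a generating set of $\L$. Because $\L=K\Q/I$ is generated as a $K$-algebra by the vertex idempotents $\{v\mid v\in\Q_0\}$ together with the arrow classes $\{\overline{a}\mid a\in\Q_1\}$, it suffices to verify $vx=xv$ for every vertex $v$ and $\overline{a}\,x=x\,\overline{a}$ for every arrow $a$; the commutation $xy=yx$ for arbitrary $y\in\L$ then follows by $K$-linearity and multiplicativity. So the whole argument collapses to producing, for each vertex and each arrow, a special cycle $C\in\CC_{(\alpha)}$ of which that element is a prefix of $C^{\mu(\alpha)}$.

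For the vertices, I would fix $v\in\Q_0$ and let $V\in\G_1$ be the polygon associated to it. Condition C3 of Definition \ref{defbra} guarantees some $\alpha\in V$ with $\val(\alpha)\mu(\alpha)>1$, hence $\alpha\in\G_0\setminus\T_{\G}$ and $\C^{\,v}_{(\alpha)}\neq\emptyset$. For any $C\in\C^{\,v}_{(\alpha)}$ the trivial (length-zero) prefix of $C^{\mu(\alpha)}$ is $s(C)=v$, so the hypothesis of the lemma directly yields $vx=xv$.

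For the arrows, I would fix $a\in\Q_1$ and invoke Proposition \ref{pr005}, which supplies the unique $\alpha\in\G_0\setminus\T_{\G}$ such that $a$ occurs exactly once in every special cycle of $\CC_{(\alpha)}$. Taking any special $\alpha$-cycle $C=a_{i_1}\cdots a_{i_t}$ containing $a=a_{i_r}$, the cyclic rotation $C'=a_{i_r}a_{i_{r+1}}\cdots a_{i_{r-1}}$ again lies in $\CC_{(\alpha)}$ by properties (F'3) and (F'4) of \cite[Section 2.3]{brau}; since $a$ is the first arrow of $C'$, it is a length-one prefix of $(C')^{\mu(\alpha)}$, so the hypothesis delivers $\overline{a}\,x=x\,\overline{a}$.

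I do not anticipate any serious obstacle. The only real content is the observation that prefixes of the powers $C^{\mu(\alpha)}$ reach every arrow (via Proposition \ref{pr005} together with closure of $\CC_{(\alpha)}$ under cyclic rotation) and every vertex (via C3 together with the trivial-prefix interpretation). Once these two commutations are established, $x$ commutes with all generators of $\L$ and hence with all of $\L$, giving $x\in Z(\L)$.
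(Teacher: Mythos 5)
Your proof is correct but takes a genuinely different route from the paper's. The paper's proof is a one-liner: by \cite[Proposition 3.3]{brau} the classes of the prefixes of the elements $C^{\mu(\alpha)}$, $C\in\CC_{(\alpha)}$, $\alpha\in\G_0\setminus\T_{\G}$, form a $K$-basis of $\L$, so the hypothesis says that $x$ commutes with a spanning set of $\L$ and centrality follows immediately from bilinearity of the product --- no reduction to generators is needed. You instead reduce to the algebra generators (vertex idempotents and arrows) and then locate each generator among the prefixes: the arrows via the fact that every arrow is the first arrow of some special cycle (the bijectivity of $f$, i.e.\ Proposition \ref{pr005}), and the idempotents via condition C3 together with the length-zero prefix. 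Both arguments work; yours needs the extra multiplicativity step and the cyclic-rotation fact, while the paper's uses the full strength of the basis description and is shorter. One point you should make explicit in your vertex step: the paper's stated definition of ``prefix'' requires $p$ to have a first arrow, so under that literal reading trivial paths are not prefixes and the hypothesis does not formally give $vx=xv$; commutation with the arrows alone does not imply commutation with the idempotents. You should either record that the convention here (as in \cite{brau}, and as the paper's own appeal to Proposition 3.3 presupposes, since the idempotents belong to the basis) counts the trivial path at $s(C)$ as a prefix of $C^{\mu(\alpha)}$, or supply a separate argument for the idempotents.
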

\begin{proof}
 This follows from \cite[Proposition 3.3]{brau}.
\end{proof}

Let $\G=(\G_0,\G_1,\mu,\o)$ be a reduced Brauer configuration with induced quiver $\Q$ and associated Brauer configuration algebra $\L=K\Q/I$. For $\alpha\in\C_{\G}$ let $\beta$ be a nontruncated vertex such that $\beta\neq\alpha$ and let $p$ be a prefix of $C^{\mu(\beta)}$, for some $C\in\CC_{(\beta)}$. Let $a^{(\alpha)}$ be the unique element in $\olCC_{(\alpha)}$, then $a^{(\alpha)}$ is a loop. If $pa^{(\alpha)}\neq0$ and $a^{(\alpha)}p=0$ in $K\Q$ then $pa^{(\alpha)}$ contains a relation of type three and hence $\overline{pa^{(\alpha)}}=\overline{a^{(\alpha)}p}=0$.  In the same way, if $a^{(\alpha)}p\neq0$ but $pa^{(\alpha)}=0$ then $a^{(\alpha)}p$ also contains a relation of type three and hence $\overline{a^{(\alpha)}p}=\overline{pa^{(\alpha)}}=0.$ Now, if $a^{(\alpha)}p\neq0$ and $pa^{(\alpha)}\neq0$ then $p$ must be an oriented cycle and hence both $a^{(\alpha)}p$ and $pa^{(\alpha)}$ contain a relation of type three, and also in this case $\overline{a^{(\alpha)}p}=\overline{pa^{(\alpha)}}=0$. Now, if 
$\beta=\alpha$ then $p$ must be a prefix of $\left(a^{(\alpha)}\right)^{\mu(\alpha)}$ and obviously $p$ commutes with $a^{(\alpha)}$.

\begin{propo}\label{2.pr005}
 Let $\G=(\G_0,\G_1,\mu,\o)$ be a reduced Brauer configuration with associated Brauer configuration algebra $\L$. Then \begin{enumerate}
 \item\label{2.pr005.3} \[\left\{C^{(V)}\,|\,V\in\G_1\right\}\subset Z(\L);\]
 \item\label{2.pr005.1} \[\bigcup_{\alpha\in\C_{\G}}\left\{\bar{a}\,|\,a\in\olCC_{(\alpha)}\right\}\subset Z(\L);\]
 \item\label{2.pr005.2} \[\bigcup_{\alpha\in h^{-1}(1)}\left\{\,\overline{D^{(\alpha)}_{V,s}}\big{|}\,(V,s)\in\M^{(\alpha)}\right\}\subset Z(\L).\]
 \end{enumerate}
\end{propo}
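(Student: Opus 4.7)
The overall plan is to invoke Lemma \ref{2.lem002} in each of the three parts; that is, to verify, for every prefix $\overline{p}$ of some $C^{\mu(\alpha)}$ with $C\in\CC_{(\alpha)}$ and $\alpha\in\G_0\setminus\T_{\G}$, the commutation $\overline{p}\,x=x\,\overline{p}$ for the element $x$ at hand.

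For part (\ref{2.pr005.3}), Proposition \ref{pr002}.(\ref{pr002.1}) places $C^{(V)}$ in $\textrm{soc}_{\L^{e}}\L$, so every non-trivial prefix $\overline{p}\in\textrm{rad}(\L)$ annihilates $C^{(V)}$ on both sides; for trivial prefixes (vertices) the commutation is automatic because $C^{(V)}\in v\L v$. Hence $C^{(V)}\in Z(\L)$.

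For part (\ref{2.pr005.1}) I would formalize the informal argument in the paragraph preceding the proposition. For $\alpha\in\C_{\G}$ the unique element $a\in\olCC_{(\alpha)}$ is a loop at the vertex $v$ associated to the only polygon containing $\alpha$. If $\beta\neq\alpha$, the arrows of $p$ are associated to $\beta$ by Proposition \ref{pr005}, and any potentially non-zero concatenation would place two consecutive arrows associated to distinct non-truncated vertices, which is not a subpath of any special cycle; hence a type three relation kills both products. If $\beta=\alpha$, then $p$ is a power of $a$, with which $\overline{a}$ commutes trivially.

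The substantive work lies in part (\ref{2.pr005.2}), and I would split the verification by $\beta$ and by the source vertex $v'$ of $C\in\C_{(\alpha)}^{\,v'}$. For $\beta\neq\alpha$ the argument parallels part (\ref{2.pr005.1}): by (\ref{2.0021}) the first and last arrows of $D^{(\alpha)}_{V,s}$ are both associated to $\alpha$, so any non-trivial concatenation with $\overline{p}$ forces a type three relation and vanishes. For $\beta=\alpha$ with $v'\neq v$, $\overline{D^{(\alpha)}_{V,s}}\,\overline{p}$ vanishes by source/target mismatch; using $|D^{(\alpha)}_{V,s}|=\mu(\alpha)\val(\alpha)-1$, the left product $\overline{p}\,\overline{D^{(\alpha)}_{V,s}}$ either fails a cyclic-alignment test (type three) or, when it does align along the big cycle of $\alpha$-arrows, has total length at least $\mu(\alpha)\val(\alpha)+1$ and is killed by a type two relation. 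The decisive case is $\beta=\alpha$ with $v'=v$: here Proposition \ref{pr001}.(\ref{pr001.2}), combined with the type one identification (\ref{009}), yields the key identity
\[
\overline{q^{(\alpha,v)}_s}\cdot\overline{D^{(\alpha)}_{V,s}}\;=\;\overline{D^{(\alpha)}_{V,s}}\cdot\overline{q^{(\alpha,v)}_s}\;=\;C^{(V)},
\]
while for any other prefix $p$ of any $(C^{(\alpha,v)}_j)^{\mu(\alpha)}$ the two products vanish, again by the same type three or type two arguments. The main obstacle is the bookkeeping in this final subcase: one must track the position of the last arrow of $p$ and the first arrow of $D^{(\alpha)}_{V,s}$ in the cyclic arrangement of $\alpha$-arrows (Figure \ref{fig3}) to determine exactly when the concatenation is a subpath of the big cycle, and in that case exploit the length bound to trigger the type two relation on long paths. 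Once this is organized, Lemma \ref{2.lem002} applies uniformly to all three claims.
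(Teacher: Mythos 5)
Your proposal is correct and follows essentially the same route as the paper: all three parts are reduced to Lemma \ref{2.lem002}, parts (\ref{2.pr005.3}) and (\ref{2.pr005.1}) via the socle property and the type three relations respectively, and part (\ref{2.pr005.2}) via the same case split on $\beta$ with the decisive computation $\overline{q^{(\alpha,v)}_s}\,\overline{D^{(\alpha)}_{V,s}}=(\overline{C^{(\alpha,v)}_{s}})^{\mu(\alpha)}$ and $\overline{D^{(\alpha)}_{V,s}}\,\overline{q^{(\alpha,v)}_s}=(\overline{C^{(\alpha,v)}_{s+1}})^{\mu(\alpha)}$, identified by the type one relations. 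Your version is slightly more explicit about the subcases the paper compresses into ``it is sufficient to consider only the case when $p$ is a prefix of $(C^{(\alpha,v)}_s)^{\mu(\alpha)}$'', and it correctly attributes the final identification to type one relations (the paper's text misnames it as type three), but the argument is the same.
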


\begin{proof}
 \ref{2.pr005.3}. It is obvious by Proposition \ref{pr002}.\ref{pr002.1}.\\
 
 \ref{2.pr005.1}. It follows from the previous observations and Lemma \ref{2.lem002}.\\
 
 \ref{2.pr005.2}. Let $\alpha\in h^{-1}(1)$ and $(V,s)\in\M^{(\alpha)}$ be an ordered pair that satisfies (\ref{2.0019}). So, we have that the non-special cycle $q^{(\alpha,v)}_s$ is a loop and the elements $C^{(\alpha,v)}_{s},C^{(\alpha,v)}_{s+1}$ and $q^{(\alpha,v)}_{s}$ can be graphically represented as
\begin{figure}[H]
\centering
 \begin{pspicture}(-2.8,-2.8)(2.8,2.8)
  \psarc[linewidth=1.1pt]{->}(0,0){2}{-105}{-75}
  \psarc[linewidth=1.1pt]{->}(0,0){2}{-75}{-45}
  \psarc[linewidth=1.1pt]{->}(0,0){2}{-45}{-15}
  \psarc[linewidth=0.7pt,linestyle=dashed]{->}(0,0){2}{-15}{255}
  \rput[t](-0.087,-1.998){\footnotesize$q^{(\alpha,v)}_{s-1}$}
  \rput[tl](1,-1.732){\footnotesize$q^{(\alpha,v)}_{s}$}
  \rput[tl](1.732,-1){\footnotesize$q^{(\alpha,v)}_{s+1}$}
  \rput[b](0.492,-1.8325){\footnotesize$C^{(\alpha,v)}_{s}$}
  \rput[b](1.273,-1.273){\footnotesize$C^{(\alpha,v)}_{s+1}$}
  \end{pspicture}
  \caption{}\label{fig6}
 \end{figure}
 Let $D$ be the element in $\Psi^{\,\,\,v}_{(\alpha)}$ associated to the pair $(V,s)$. Let $\beta\in\G_0\setminus\T_{\G}$ and $p$ be a prefix of $C^{\mu(\beta)}$ for some $C\in\CC_{(\beta)}$. Suppose that $\alpha\neq\beta$. If $Dp\neq0$ and $pD=0$ in $K\Q$ then the path $Dp$ contains a relation of type three, and hence $\overline{Dp}=\overline{Dp}=0$ in $\L$. In a similar way we have that if $Dp=0$ but $pD\neq0$ in $K\Q$, then $\overline{pD}=\overline{Dp}=0$ in $\L$. If $pD\neq0$ and $Dp\neq0$ then $p$ must be a cycle, but also in this case both $pD$ and $Dp$ contain a relation of type thre, and hence $\overline{pD}=\overline{Dp}=0$. Suppose now that $\alpha=\beta$, and without loss of generality assume that $\alpha\in\A_{\G}$ (in the case $\alpha\in\B_{\G}$ the reasonig is analogous). It is sufficient to consider only the case when $p$ is a prefix of {\footnotesize$\left(C^{(\alpha,v)}_{s}\right)^{\mu(\alpha)}$}. If $\ell(p)>1$ then is easy to see that $Dp$ contains a relation of type two, and if $pD\neq0$ 
in $K\Q$ then also contains a relation of type two. Hence $\overline{Dp}=\overline{pD}=0$. If $\ell(p)=1$ then necessarily $p=q^{(\alpha,v)}_s$, and by Proposition \ref{pr001} we obtain \begin{eqnarray}\overline{D}\overline{p} & = & \left(\overline{C^{(\alpha,v)}_{s+1}}\right)^{\mu(\alpha)-1}q^{\overline{(\alpha,v)}}_{s+1}\cdots q^{\overline{(\alpha,v)}}_{s-1}q^{\overline{(\alpha,v)}}_{s}\nonumber\\ & = & \left(\overline{C^{(\alpha,v)}_{s+1}}\right)^{\mu(\alpha)}\label{2.0023}\\\overline{p}\overline{D} & = & q^{\overline{(\alpha,v)}}_{s}\left(\overline{C^{(\alpha,v)}_{s+1}}\right)^{\mu(\alpha)-1}q^{\overline{(\alpha,v)}}_{s+1}\cdots q^{\overline{(\alpha,v)}}_{s-1}\nonumber\\ & = & \left(\overline{C^{(\alpha,v)}_{s}}\right)^{\mu(\alpha)}.\label{2.0024}\end{eqnarray} Note that {\footnotesize$\left(C^{(\alpha,v)}_{s}\right)^{\mu(\alpha)}-\left(C^{(\alpha,v)}_{s+1}\right)^{\mu(\alpha)}$} is a 
relation of type three, then (\ref{2.0023}) and (\ref{2.0024}) are equal. By Lemma \ref{2.lem002} we can conclude that $\overline{D}$ is in $Z(\L)$.
\end{proof}

\subsection{Calculating the dimension of the center.}
We start this section with two lemmas. 

\begin{lem}\label{pr004}
Let $\L$ be the Brauer configuration algebra associated to the reduced and connected Brauer configuration $\G$. For each $a\in\Q_1$ there exists a unique $\alpha\in\G_0\setminus\T_{\G}$ such that
\begin{enumerate}
 \item\label{pr004.1} If $\alpha\notin\B_{\G}$ then there exist a unique pair $(C,C')$ of elements in $\CC_{(\alpha)}$ such that $\overline{Ca}\neq0$ and $\overline{aC'}\neq0$.
 \item\label{pr004.2} If $\alpha\in\B_{\G}$ then $\overline{Da}=\overline{aE}=0$, for any $D,E\in\CC$, where $\CC$ is the collection defined in (\ref{040}).
\end{enumerate}
\end{lem}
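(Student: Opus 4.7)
The plan is to reduce everything to Proposition \ref{pr005} plus a case analysis on $\alpha$, controlled entirely by which of the three relation types can arise when we concatenate a special cycle with the arrow $a$.

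First, the existence and uniqueness of $\alpha\in\G_0\setminus\T_{\G}$ such that $a$ occurs in a (hence every) special $\alpha$-cycle is exactly Proposition \ref{pr005}. So fix $a\in\Q_1$ and let $\alpha$ be this unique vertex. Write the successor sequence of $\alpha$ as in (\ref{001}) and index so that $a=a^{(\alpha)}_{j_l}$, with $s(a)=v_{i_l}$ and $t(a)=v_{i_{l+1}}$. Denote the special $\alpha$-cycles by $C_1,\ldots,C_{\val(\alpha)}$ as in the excerpt; in particular $C_l$ starts and ends at $v_{i_l}$ and has $a$ as its first arrow, while $C_{l+1}$ starts and ends at $v_{i_{l+1}}$ and has $a$ as its last arrow.

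For part \ref{pr004.1}, assume $\alpha\notin\B_{\G}$, so $\mu(\alpha)\ge 2$. I would take $C:=C_l$ and $C':=C_{l+1}$ (or, if $\alpha\in\C_{\G}$, the unique loop, which plays both roles). The concatenation $C_la$ has length $\val(\alpha)+1$; it cannot hit a type three relation since the two consecutive arrows at each position are, by construction, consecutive arrows of a special cycle; and it cannot hit a type two relation because such a relation demands the full prefix $C_l^{\mu(\alpha)}f(C_l)$ of length $\val(\alpha)\mu(\alpha)+1>\val(\alpha)+1$. Hence $\overline{C_la}\neq 0$; symmetrically $\overline{aC_{l+1}}\neq 0$. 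Uniqueness of the pair $(C,C')$ is forced: by Proposition \ref{pr002}.\ref{pr002.2}, any $D\in\CC_{(\beta)}$ with $\beta\neq\alpha$ gives $\overline{Da}=0$ because its last arrow $b$ belongs to $\olCC_{(\beta)}$ while $a\in\olCC_{(\alpha)}$, so $ba$ is not a subpath of any special cycle and is therefore a type three relation; and within $\CC_{(\alpha)}$ the only cycle ending at $s(a)$ and whose last arrow is consecutive with $a$ in the cyclic order is $C_l$, and analogously $C_{l+1}$ for $C'$.

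For part \ref{pr004.2}, assume $\alpha\in\B_{\G}$, so $\mu(\alpha)=1$. For any $D\in\CC$, the same alternative applies: either $D\in\CC_{(\beta)}$ with $\beta\neq\alpha$, in which case $\overline{Da}=0$ by the type three argument of the previous paragraph, or $D\in\CC_{(\alpha)}$, in which case $\overline{Da}\neq 0$ forces $D=C_l$ and hence $Da=C_l f(C_l)=C_l^{\mu(\alpha)}f(C_l)$, which is precisely a type two relation; so $\overline{Da}=0$. The argument for $\overline{aE}$ is symmetric, reading the cyclic ordering backwards and using $E=C_{l+1}$ as the only candidate within $\CC_{(\alpha)}$.

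The only real obstacle is the case-by-case bookkeeping in the $\A_{\G}$ sub-case of \ref{pr004.1}, where one must verify that appending $a$ at either end of the special cycle produces neither a type two nor a type three relation; the key observation making this easy is that a single extra arrow is far too short to trigger the $\val(\alpha)\mu(\alpha)+1$ threshold of the type two relations, while type three is precluded by the cyclic-consecutivity structure built into the special cycles themselves.
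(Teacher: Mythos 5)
Your argument is correct and follows essentially the same route as the paper's (much terser) proof: Proposition \ref{pr005} supplies the unique $\alpha$, the pair $(C,C')$ is singled out by $a$ being the first, respectively last, arrow of the special cycle, and all the vanishing statements come from the type two and type three relations exactly as you describe. The one point worth tightening is the non-vanishing of $\overline{C_la}$ and $\overline{aC_{l+1}}$ when $\mu(\alpha)\ge 2$: since the ideal also contains the binomial type one relations, ``does not contain a monomial relation'' does not by itself guarantee a path is nonzero, so one should note that $C_lf(C_l)$ is a proper prefix of $C_l^{\mu(\alpha)}$ and invoke the basis of \cite[Proposition 3.3]{brau}.
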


\begin{proof}
 \ref{pr004.1}. By Proposition \ref{pr005} there exists a unique $\alpha\in\G_0\setminus\T_{\G}$  such that $a$ occurs once in any of the special cycles in $\CC_{(\alpha)}$. Let $(C,C')$ be the pair of elements of $\CC_{(\alpha)}$ defined by
 \begin{itemize}
  \item $a$ is the first arrow of $C$;
  \item $a$ is the last arrow of $C'$.
 \end{itemize}
Then $(C,C')$ is the required pair.\\
 
\ref{pr004.2}. It follows from the type two and type three relations.
\end{proof}
If $\alpha\in\A_{\G}$ it follows from the statement of Lemma \ref{pr004}.\ref{pr004.1} that, without lost of generality, we can assume that $a=a^{(\alpha)}_i$, where $1\le i\le\val(\alpha)$, as in the sequence in (\ref{001}). Then the special cycles $C$ and $C'$ are given respectively by \begin{equation}\label{013}\begin{array}{rcl}C & = & a^{(\alpha)}_{i}\cdots a^{(\alpha)}_{i-1},\\ C' & = & a^{(\alpha)}_{i+1}\cdots a^{(\alpha)}_{i},\end{array}\end{equation} where $a^{(\alpha)}_1,\ldots,a^{(\alpha)}_{\val(\alpha)}$ is the collection of arrows induced by the sucessor sequence of the vertex $\alpha$. Now, if the vertex $\alpha$ belongs to $\C_{\G}$ necessarily the arrow $a$ is a loop, and so $a=C=C'$. We have the following lemma.
\begin{lem}\label{le001}
 Assuming all the hypothesis of Lemma \ref{pr004}, for $a\in\Q_1$  let $\alpha\in\G_0\setminus\T_{\G}$ be the unique vertex such that $\alpha\notin\B_{\G}$ and let $C,C'\in\CC_{(\alpha)}$ be the unique special cycles associated to $\alpha$ and $a$ as set up in (\ref{013}). For any positive integer $l$ we have that \[C^la=aC'^l.\]
\end{lem}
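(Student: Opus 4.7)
The plan is to prove the identity first at the level of paths in $K\Q$ (where it becomes a transparent associativity statement), then transport it to $\L$ via the canonical surjection. The point is that both $C^l a$ and $a C'^l$ name the \emph{same concatenation of arrows} around the cycle, just parsed differently.

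First I would set up the base case $l=1$. Writing $a=a^{(\alpha)}_i$ and using (\ref{013}), the cycle $C$ factors as $C=a\cdot w$ with $w=a^{(\alpha)}_{i+1}\cdots a^{(\alpha)}_{i-1}$, while $C'=w\cdot a$. Hence, as paths in $K\Q$,
\[
Ca \;=\; a\,w\,a \;=\; a\,C',
\]
so that $\overline{Ca}=\overline{aC'}$ in $\L$. The case $\alpha\in\C_{\G}$ falls under this too, since then $a=C=C'$ is a loop and the equality is tautological.

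Next I would induct on $l$. Assuming $C^{l-1}a=a\,(C')^{l-1}$ in $K\Q$, multiply the base-case identity on the left by $C^{l-1}$ and apply the inductive hypothesis:
\[
C^{l}a \;=\; C^{l-1}(Ca) \;=\; C^{l-1}a\,C' \;=\; a\,(C')^{l-1}C' \;=\; a\,(C')^{l}.
\]
Applying the canonical projection $\pi\colon K\Q\to\L$ yields the claimed equality in $\L$.

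There is essentially no obstacle, but one sanity check is worth noting: for $l$ large (namely $l\ge\mu(\alpha)+1$) both sides vanish in $\L$ by Proposition \ref{pr002}.\ref{pr002.4}, so the equality is trivially $0=0$; and for $l=\mu(\alpha)$ the common expression $C^{\mu(\alpha)}a$ is precisely a relation of type two, hence both sides again equal zero. For $1\le l<\mu(\alpha)$ the common path is a nonzero element of the basis of $\L$ described in \cite[Proposition 3.3]{brau}. Thus the identity is just the equality of two presentations of a single path, and no relations other than associativity of concatenation are invoked in the proof.
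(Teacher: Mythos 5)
Your argument is correct and is exactly the ``straightforward'' verification the paper omits (its proof of Lemma \ref{le001} is left to the reader): writing $C=aw$ and $C'=wa$ with $w=a^{(\alpha)}_{i+1}\cdots a^{(\alpha)}_{i-1}$, both $C^la$ and $aC'^l$ are literally the same path in $K\Q$, and your induction plus the separate treatment of the loop case $\alpha\in\C_{\G}$ covers everything. Nothing is missing.
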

\begin{proof}
The proof is straightforward and is left to the reader.\\
\end{proof}

Let $\L=K\Q/I$ be the Brauer configuration algebra associated to the reduced Brauer configuration $\G$, and assume that $\L$ is indecomposable and rad$^2(\L)\neq0$. Let $\chi$ be an element of the $K$-space $\coprod_{v\in\Q_0}v\L v$. For a vertex $v$ in $\Q$ let $\chi_v$ denote the element defined in (\ref{042}). By Proposition \ref{pr003} and its proof we have that the element $\chi_v$ of $v\L v$ can be expressed by
{\footnotesize\begin{multline}\label{011}\chi_v=x^{(v)}v+\sum_{\alpha\in\widehat{V}}\left(\sum_{j=1}^{\mu(\alpha)-1}\left(\sum_{C\in\C^{\,v}_{(\alpha)}}y^{(\alpha)}_{j,f(C)}\overline{C}^j\right)\right)+z^{(v)}C^{(V)}\\+\sum_{\alpha\in\A_{\G}^{V>1}}\left(\sum_{k=1}^{\occ(\alpha,V)-1}\left(\sum_{l=1}^{\occ(\alpha,V)}\left(\sum_{j=0}^{\mu(\alpha)-1}y^{(\alpha)}_{j,f(C^{(\alpha,v)}_l),k}\left(\overline{C^{(\alpha,v)}_{l}}\right)^j\overline{q^{(\alpha,v)}_{l}}\cdots \overline{q^{(\alpha,v)}_{l+k-1}}\right)\right)\right)\\+\sum_{\beta\in\B_{\G}^{V>1}}\left(\sum_{k=1}^{\occ(\beta,V)-1}\left(\sum_{l=1}^{\occ(\beta,V)}y^{(\beta)}_{g(q^{(\beta,v)}_l),k}\overline{q^{(\beta,v)}_{l}}\cdots \overline{q^{(\beta,v)}_{l+k-1}}\right)\right)\end{multline}} where $x^{(v)},z^{(v)},y^{(\alpha)}_{j,f(C)},y^{(\alpha)}_{j,f(C^{(\alpha,v)}_l),k},y^{(\beta)}_{g(q^{(\beta,v)}_l),k}$ are scalars in the field $K$, and and $g:\neg\CC\to\Q_1$ the map that sends a non-special cycle to its first arrow, with \[\neg\CC_{(\alpha)}=\bigcup_{V\in\V_{(\alpha)}}\neg\C^{\,v}_{(\alpha)}\] for every $\alpha\in\D_{\G}$, and \[\neg\CC=\bigcup_{\alpha\in\D_{\G}}\neg\CC_{(\alpha)}.\] If $d^*_1:\coprod_{v\in\Q_0}v\L v\to\coprod_{a\in\Q_1} s(a)\L t(a)$ is the $\L$-bimodule homomorphism defined in Subsection \ref{subsecc00b} we have that the image of $\chi$ under $d_1^*$ can be computed as \begin{equation}\label{014}d_1^*(\chi)=\left(\chi_{s(a)}\bar{a}-\bar{a}\chi_{t(a)}\right)_{a\in\Q_1}.\end{equation} Let $a$ be an arrow of the quiver $\Q$, and let $\alpha$ be the unique vertex in Lemma \ref{pr004} associated to the arrow $a$. For the vertex $\alpha$ we have the following possible cases.
\begin{enumerate}[(a)]
 \item\label{1.a} $\alpha\in\A_{\G}$;
 \item\label{1.b} $\alpha\in\B_{\G}$;
 \item\label{1.c} $\alpha\in\C_{\G}$.
\end{enumerate}

For the arrow $a$ let $V,W$ be polygons of the configuration $\G$ associated to the vertices $v,w$ of the quiver $\Q$ respectively, and such that $s(a)=v$ and $t(a)=w$. If $\alpha\in\D_{\G}$ we can suppose that $a=a^{(\alpha)}_i$ for some $1\le i\le\val(\alpha)$, and the unique cycles $C,C'$ of Lemma \ref{pr004} are given respectively by \begin{eqnarray*}C^{(\alpha,v)}_s & = & a^{(\alpha)}_i\cdots a^{(\alpha)}_{i-1},\\ C^{(\alpha,w)}_r & = & a^{(\alpha)}_{i+1}\cdots a^{(\alpha)}_i,\end{eqnarray*} where $1\le s\le\occ(\alpha,V)$ and $1\le r\le\occ(\alpha,W)$. For the understanding of some further computations we will represent graphically the location of $C^{(\alpha,v)}_s$ and $C^{(\alpha,w)}_r$ by 

\begin{figure}[H]
\centering
 \begin{pspicture}(-2.8,-2.8)(2.8,2.8)
  \psarc[linewidth=1.1pt]{->}(0,0){2}{-120}{-90}
  \psarc[linewidth=1.1pt]{->}(0,0){2}{-90}{-60}
  \psarc[linewidth=1.1pt]{->}(0,0){2}{-60}{-30}
  \psarc[linewidth=0.7pt,linestyle=dashed]{->}(0,0){2}{-30}{240}
  \rput[tr](-0.518,-1.932){\tiny$a^{(\alpha)}_{i-1}$}
  \rput[t](0.518,-1.932){\tiny$a^{(\alpha)}_{i}$}
  \rput[tl](1.414,-1.414){\tiny$a^{(\alpha)}_{i+1}$}
  \rput[b](0,-1.9){\tiny$C^{(\alpha,v)}_s$}
  \rput[b](0.95,-1.645){\tiny$C^{(\alpha,w)}_r$}
 \end{pspicture}
 \caption{}\label{fig7}
\end{figure}

\noindent (\ref{1.a}) Case $\alpha\in\A_{\G}$: Using expression (\ref{011}) for $\chi_v$ and $\chi_w$ and Lemma \ref{pr004}, we have that the $a$-entry in (\ref{014}) is equal to
{\scriptsize\begin{multline}\label{015}\left(x^{(v)}-x^{(w)}\right)\bar{a}+\sum_{j=1}^{\mu(\alpha)-1}y^{(\alpha)}_{j,a^{(\alpha)}_i}\left(\overline{C^{(\alpha,v)}_s}\right)^j\bar{a}\\+\sum_{l=1}^{\occ(\alpha,V)-1}\left(\sum_{j=0}^{\mu(\alpha)-1}y^{(\alpha)}_{j,f(C^{(\alpha,v)}_{s+l}),\occ(\alpha,V)-l}\left(\overline{C^{(\alpha,v)}_{s+l}}\right)^jq^{\overline{(\alpha,v)}}_{s+l}\cdots q^{\overline{(\alpha,v)}}_{s+\occ(\alpha,V)-1}\bar{a}\right)\\-\left(\sum_{k=1}^{\occ(\alpha,W)-1}\left(\sum_{j=0}^{\mu(\alpha)-1}y^{(\alpha)}_{j,a^{(\alpha)}_{i+1},k}\bar{a}\left(\overline{C^{(\alpha,w)}_{r}}\right)^jq^{\overline{(\alpha,w)}}_{r}\cdots q^{\overline{(\alpha,w)}}_{r+k-1}\right)+\sum_{j=1}^{\mu(\alpha)-1}y^{(\alpha)}_{j,a^{(\alpha)}_{i+1}}\bar{a}\left(\overline{C^{(\alpha,w)}_r}\right)^j\right)
\end{multline}}
By Proposition \ref{pr001}.\ref{pr001.2} and Lemma \ref{le001} we have {\footnotesize\begin{eqnarray*}\left(C^{(\alpha,v)}_{s+l}\right)^jq^{(\alpha,v)}_{s+l}\cdots q^{(\alpha,v)}_{s+\occ(\alpha,V)-1}a & = & q^{(\alpha,v)}_{s+l}\cdots q^{(\alpha,v)}_{s+\occ(\alpha,V)-1}\left(C^{(\alpha,v)}_s\right)^ja,\\ a\left(C^{(\alpha,w)}_r\right)^j & = & \left(C^{(\alpha,v)}_s\right)^ja\end{eqnarray*}}for all $0\le j\le\mu(\alpha)-1$, and all $1\le l\le\occ(\alpha,V)-1$. So applying these equalities in (\ref{015}) and then collecting similar terms we obtain {\scriptsize\begin{multline}\label{017}\left(x^{(v)}-x^{(w)}\right)\bar{a}+\sum_{l=1}^{\occ(\alpha,V)-1}\left(\sum_{j=0}^{\mu(\alpha)-1}y^{(\alpha)}_{j,f(C^{(\alpha,v)}_{s+l}),\occ(\alpha,V)-l}q^{\overline{(\alpha,v)}}_{s+l}\cdots q^{\overline{(\alpha,v)}}_{s+\occ(\alpha,V)-1}\left(\overline{C^{(\alpha,v)}_{s}}\right)^j\bar{a}\right)\\+\sum_{j=1}^{\mu(\alpha)-1}\left(y^{(\alpha)}_{j,a^{(\alpha)}_i}-y^{(\alpha)}_{j,a^{(\alpha)}_{i+1}}\right)\left(\overline{C^{(\alpha,v)
}_s}\right)^j\bar{a}-\sum_{k=1}^{\occ(\alpha,W)-1}\left(\sum_{j=0}^{\mu(\alpha)-1}y^{(\alpha)}_{j,a^{(\alpha)}_{i+1},k}\left(\overline{C^{(\alpha,v)}_s}\right)^j\bar{a}q^{\overline{(\alpha,w)}}_{r}\cdots q^{\overline{(\alpha,w)}}_{r+k-1}\right)\end{multline}}Now, if $a=a^{(\alpha)}_i$ is a loop then we obtain that $q^{(\alpha,v)}_s=a$, $v=w$, $r=s+1$, and by Figure \ref{fig7}, $f(C^{(\alpha,v)}_{s+1})=a^{(\alpha)}_{i+1}$. For the particular values $l=1$ and $k=\occ(\alpha,V)-1$ we can regroup terms in (\ref{017}) which have same scalars. Then in (\ref{017}) appears the expression {\footnotesize\begin{multline}\label{043}\sum_{j=0}^{\mu(\alpha)-1}y^{(\alpha)}_{j,a^{(\alpha)}_{i+1},\occ(\alpha,V)-1}\left(\left(\overline{C^{(\alpha,v)}_{s+1}}\right)^jq^{\overline{(\alpha,v)}}_{s+1}\cdots q^{\overline{(\alpha,v)}}_{s-1}\bar{a}-\left(\overline{C^{(\alpha,v)}_{s}}\right)^j\bar{a}q^{\overline{(\alpha,v)}}_{s+1}\cdots q^{\overline{(\alpha,v)}}_{s-1}\right)\\=\sum_{j=0}^{\mu(\alpha)-2}y^{(\alpha)}_{j,a^{(\alpha)}_{
i+1},\occ(\alpha,V)-1}\left(\left(\overline{C^{(\alpha,v)}_{s+1}}\right)^{j+1}-\left(\overline{C^{(\alpha,v)}_{s}}\right)^{j+1}\right). \end{multline}} The right hand side of the equality in (\ref{043}) holds because \begin{itemize}\item {\small$\overline{C^{(\alpha,v)}_{s+1}}=q^{\overline{(\alpha,v)}}_{s+1}\cdots q^{\overline{(\alpha,v)}}_{s-1}\bar{a}$} and {\small$\overline{C^{(\alpha,v)}_{s}}=\bar{a}q^{\overline{(\alpha,v)}}_{s+1}\cdots q^{\overline{(\alpha,v)}}_{s-1}$} by Proposition \ref{pr001}.\ref{pr001.1};
\item {\footnotesize$\left(\overline{C^{(\alpha,v)}_{s+1}}\right)^{\mu(\alpha)}=\left(\overline{C^{(\alpha,v)}_{s}}\right)^{\mu(\alpha)}$} by type one relations.
\end{itemize} In conclusion, when $a=a^{(\alpha)}_i$ is a loop (\ref{017}) is equal to
{\scriptsize\begin{multline}\label{044}\sum_{j=1}^{\mu(\alpha)-1}\left(y^{(\alpha)}_{j,a^{(\alpha)}_{i}}-y^{(\alpha)}_{j,a^{(\alpha)}_{i+1}}\right)\left(\overline{C^{(\alpha,v)}_s}\right)^j\bar{a}\\+\sum_{\substack{1\le l\le\occ(\alpha,V)-1\\0\le j\le\mu(\alpha)-1\\l\neq1}}y^{(\alpha)}_{j,f\left(C^{(\alpha,v)}_{s+l}\right),\occ(\alpha,V)-l}q^{\overline{(\alpha,v)}}_{s+l}\cdots q^{\overline{(\alpha,v)}}_{s-1}\left(\overline{C^{(\alpha,v)}_s}\right)^j\bar{a}\\-\sum_{\substack{1\le k\le\occ(\alpha,V)-1\\0\le j\le\mu(\alpha)-1\\k\neq\occ(\alpha,V)-1}}y^{(\alpha)}_{j,a^{(\alpha)}_{i+1},k}\left(\overline{C^{(\alpha,v)}_s}\right)^j\bar{a}q^{\overline{(\alpha,v)}}_{s+1}\cdots q^{\overline{(\alpha,v)}}_{s+k}\\+\sum_{j=0}^{\mu(\alpha)-2}y^{(\alpha)}_{j,a^{(\alpha)}_{i+1},\occ(\alpha,V)-1}\left(\left(\overline{C^{(\alpha,v)}_{s+1}}\right)^{j+1}-\left(\overline{C^{(\alpha,v)}_{s}}\right)^{j+1}\right)\end{multline}}Note that the expression obtained in (\ref{044}) was expected. One of the missing scalars in (\ref{044}) 
is exactly the same associated to the class of the elements defined in (\ref{2.0021}) of Subsection \ref{subsecc00d}, which is associated to the loop $q^{(\alpha,v)}_s=a^{(\alpha)}_i$ (the element associated to the scalar {\scriptsize$y^{(\alpha)}_{\mu(\alpha)-1,a^{(\alpha)}_{i+1},\occ(\alpha,V)-1}$}). We showed in Proposition \ref{2.pr005}.\ref{2.pr005.2} that this type of elements are in the center of the algebra.\\

\noindent (\ref{1.b}) Case $\alpha\in\B_{\G}$: Using again (\ref{011}) to compute $\chi_v$ and $\chi_w$ and Lemma \ref{pr004}, we have that the $a$-entry  in (\ref{014}), in this case, is equal to
{\footnotesize\begin{multline}\label{016}\left(x^{(v)}-x^{(w)}\right)\bar{a}+\sum_{l=1}^{\occ(\alpha,V)-1}y^{(\alpha)}_{g(q^{(\alpha,v)}_{s+l}),\occ(\alpha,V)-l}q^{\overline{(\alpha,v)}}_{s+l}\cdots q^{\overline{(\alpha,v)}}_{s+\occ(\alpha,V)-1}\bar{a}\\-\sum_{k=1}^{\occ(\alpha,W)-1}y^{(\alpha)}_{a^{(\alpha)}_{i+1},k}\bar{a}q^{\overline{(\alpha,w)}}_{r}\cdots q^{\overline{(\alpha,w)}}_{r+k-1}\end{multline}}  Also in this case, if $a=a^{(\alpha)}_i$ is a loop then $q^{(\alpha,v)}_s=a$, $v=w$, $r=s+1$ and $g(q^{(\alpha,v)}_{s+1})=a^{(\alpha)}_{i+1}$. For the particular values $l=1$ and $k=\occ(\alpha,V)-1$ we have that in (\ref{016}) appears the expression
{\footnotesize\begin{multline}y^{(\alpha)}_{a^{(\alpha)}_{i+1},\occ(\alpha,V)-1}q^{\overline{(\alpha,v)}}_{s+1}\cdots q^{\overline{(\alpha,v)}}_{s-1}\overline{a}-y^{(\alpha)}_{a^{(\alpha)}_{i+1},\occ(\alpha,V)-1}\overline{a}q^{\overline{(\alpha,v)}}_{s+1}\cdots q^{\overline{(\alpha,v)}}_{s-1}\\=y^{(\alpha)}_{a^{(\alpha)}_{i+1},\occ(\alpha,V)-1}q^{\overline{(\alpha,v)}}
_{s+1}\cdots q^{\overline{(\alpha,v)}}_{s}-y^{(\alpha)}_{a^{(\alpha)}_{i+1},\occ(\alpha,V)-1}q^{\overline{(\alpha,v)}}_{s}\cdots q^{\overline{(\alpha,v)}}_{s-1}\\=y^{(\alpha)}_{a^{(\alpha)}_{i+1},\occ(\alpha,V)-1}\overline{C^{(\alpha,v)}_{s+1}}-y^{(\alpha)}_{a^{(\alpha)}_{i+1},\occ(\alpha,V)-1}\overline{C^{(\alpha,v)}_s}=0.\end{multline}} This is due the fact that $\overline{C^{(\alpha,v)}_{s+1}}=\overline{C^{(\alpha,v)}_{s}}$ and Proposition \ref{pr001}.\ref{pr001.1}. So, when $a=a^{(\alpha)}_i$ is a loop then (\ref{016}) is equal to {\footnotesize\begin{multline}\label{2.0038}\sum_{l=2}^{\occ(\alpha,V)-1}y^{(\alpha)}_{g\left(q^{(\alpha,v)}_{s+l}\right),\occ(\alpha,V)-l}q^{\overline{(\alpha,v)}}_{s+l}\cdots q^{\overline{(\alpha,v)}}_{s}-\sum_{k=1}^{\occ(\alpha,V)-2}y^{(\alpha)}_{a^{(\alpha)}_{i+1},k}q^{\overline{(\alpha,v)}}_{s}\cdots q^{\overline{(\alpha,v)}}_{s+k}\end{multline}}As in the case (\ref{1.a}), the expression obtained in (\ref{2.0038}) was expected. The missing scalar, {\scriptsize$y^{(\alpha)}_
{a^{(\alpha)}_{i+1},\occ(\alpha,V)-1}$}, is associated to the class of elements defined in (\ref{2.0021}), which belong to the center of the algebra.\\

\noindent(\ref{1.c}) Case $\alpha\in\C_{\G}$: In this case the arrow $a$ is necessarily a loop, and by Proposition \ref{2.pr005}.\ref{2.pr005.1} the $a$-entry in (\ref{014}) is equal to zero.

\begin{obs}\label{ob01}
 We give some remarks about the appearing terms in expressions (\ref{017}), (\ref{044}), (\ref{016}) and (\ref{2.0038}). When the arrow $a$ is not a loop, the collection 
{\scriptsize
\begin{multline}
\biggl\{\left(\overline{C^{(\alpha,v)}_s}\right)^j\bar{a},q^{\overline{(\alpha,v)}}_{s+l}\cdots q^{\overline{(\alpha,v)}}_{s+\occ(\alpha,V)-1}\left(\overline{C^{(\alpha,v)}_{s}}\right)^j\bar{a},\left(\overline{C^{(\alpha,v)}_s}\right)^j\bar{a}q^{\overline{(\alpha,w)}}_{r}\cdots q^{\overline{(\alpha,w)}}_{r+k-1}\,\Big{|}\\ 0\le  j\le\mu(\alpha)-1,1\le l\le\occ(\alpha,V)-1,1\le k\le\occ(\alpha,W)-1\biggr\}
\end{multline}}
is a linearly independent subset of the $K$-space $\L$. This is due the fact that the collection {\footnotesize\begin{equation}\label{2.0039}\left\{\,q^{\overline{(\alpha,v)}}_{s+l}\cdots q^{\overline{(\alpha,v)}}_{s-1}\left(\overline{C^{(\alpha,v)}_s}\right)^j\bar{a}\,\Big{|}\,0\le j\le\mu(\alpha)-1,1\le l\le\occ(\alpha,V)-1\,\right\}\end{equation}}when it is not empty, is formed by paths strictly contained in $\left(C^{(\alpha,v)}_s\right)^{\mu(\alpha)}a$ which don't start at $a$ but they do finish at $a$; whereas the collection {\footnotesize\begin{equation}\label{2.0040}\left\{\,\left(\overline{C^{(\alpha,v)}_s}\right)^j\bar{a}q^{\overline{(\alpha,w)}}_{r}\cdots q^{\overline{(\alpha,w)}}_{r+k-1}\,\Big{|}\,0\le j\le\mu(\alpha)-1,1\le k\le\occ(\alpha,W)-1\,\right\}\end{equation}} when it is not empty, is formed by prefixes of $\left(C^{(\alpha,v)}_s\right)^{\mu(\alpha)}a$ which start at $a$ but they don't finish at $a$. Particularly, the intersection of (\ref{2.0039}) and (\ref{2.0040}) is empty. Elements 
of these sets correspond to the appearing terms in (\ref{017}) and (\ref{016}). When the arrow $a$ is a loop and $\alpha\in\A_{\G}$, then the union of the sets {\footnotesize\begin{equation}\label{2.0041}\left\{\left(\overline{C^{(\alpha,v)}_s}\right)^j\overline{a}\,\Big{|}\,1\le j\le\mu(\alpha)-1\right\},\end{equation}}{\footnotesize\begin{equation}\label{2.0042}\left\{q^{\overline{(\alpha,v)}}_{s+l}\cdots q^{\overline{(\alpha,v)}}_{s-1}\left(\overline{C^{(\alpha,v)}_s}\right)^j\overline{a}\,\Big{|}\,1<l\le\occ(\alpha,V)-1,0\le j\le\mu(\alpha)-1\right\},\end{equation}}{\footnotesize\begin{equation}\label{2.0043}\left\{\left(\overline{C^{(\alpha,v)}_s}\right)^j\overline{a}q^{\overline{(\alpha,v)}}_{s+1}\cdots q^{\overline{(\alpha,v)}}_{s+k}\,\Big{|}\,1\le k<\occ(\alpha,V)-1,0\le j\le\mu(\alpha)-1\right\},\end{equation}}{\footnotesize\begin{equation}\label{2.0044}\left\{\left(\overline{C^{(\alpha,v)}_{s+1}}\right)^{j+1}-\left(\overline{C^{(\alpha,v)}_{s}}\right)^{j+1}\,\Big{|}\,0\le j\le\mu(\alpha)-2\right\},\end{equation}} is a linearly independent subset of the $K$-space $\L$. Elements of this set correspond to the appearing terms in (\ref{044}). And when $\alpha\in\B_{\G}$ also the set {\footnotesize\begin{equation}\label{2.0045}\left\{q^{\overline{(\alpha,v)}}_{s+l}\cdots q^{\overline{(\alpha,v)}}_{s},q^{\overline{(\alpha,v)}}_{s}\cdots q^{\overline{(\alpha,v)}}_{s+k}\,\Big{|}\,1< l\le\occ(\alpha,V)-1,1\le k<\occ(\alpha,V)-1\right\}\end{equation}} is a linearly independent subset of the $K$-space $\L$, and the elements in this set correspond to the appearing terms in (\ref{2.0038}).
\end{obs}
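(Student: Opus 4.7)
The plan is to reduce every linear-independence claim to the explicit basis of $\L$ given by \cite[Proposition 3.3]{brau}, which consists of the (classes of) proper prefixes of $C^{\mu(\alpha)}$ as $C$ runs over all special cycles, together with the elements $C^{(V)}$. Two such basis vectors are distinguished by the triple (length, starting arrow, ending arrow), so it suffices in each case to check that the listed elements correspond to pairwise distinct basis vectors (or, in the case of (\ref{2.0044}), to linearly independent differences of basis vectors disjoint from the rest).

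For the non-loop case I would, using Proposition \ref{pr001}.\ref{pr001.1}, realise every listed element as a subpath of $\left(C^{(\alpha,v)}_s\right)^{\mu(\alpha)} a$; since $\bar a$ is the first arrow of $C^{(\alpha,v)}_s$ and the $q$-cycles at $v$ and at $w$ consist of arrows different from $a$, each such subpath represents a distinct basis vector of $\L$. The paths in (\ref{2.0039}) end with $\bar a$ but do not start with $\bar a$; the paths in (\ref{2.0040}) start with $\bar a$ but do not end with $\bar a$; and the elements $\left(\overline{C^{(\alpha,v)}_s}\right)^j \bar a$ both start and end with $\bar a$ and have pairwise distinct lengths. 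So the three subfamilies give pairwise distinct basis vectors and linear independence follows.

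For the loop case with $\alpha \in \A_\G$, the same start/end analysis applied to (\ref{2.0041}), (\ref{2.0042}), (\ref{2.0043}) shows these are distinct basis vectors: the restriction $l>1$ in (\ref{2.0042}) prevents an element from starting with $\bar a$, while $k<\occ(\alpha,V)-1$ in (\ref{2.0043}) prevents one from ending with $\bar a$. The delicate part is (\ref{2.0044}): the type one relations imply $\left(\overline{C^{(\alpha,v)}_{s+1}}\right)^{\mu(\alpha)} = \left(\overline{C^{(\alpha,v)}_s}\right)^{\mu(\alpha)} = C^{(V)}$, which forces $j \le \mu(\alpha)-2$; for such $j$ the two powers $\left(\overline{C^{(\alpha,v)}_s}\right)^{j+1}$ and $\left(\overline{C^{(\alpha,v)}_{s+1}}\right)^{j+1}$ remain distinct basis vectors (they start with different arrows), so each difference is nonzero and the differences with different $j$ have pairwise different degrees. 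Finally, every basis vector appearing in a difference from (\ref{2.0044}) is a pure power of some $C^{(\alpha,v)}_t$ and therefore contains no isolated $\bar a$-step sandwiched between $q$-subpaths, unlike any element of (\ref{2.0041})--(\ref{2.0043}); this coordinate disjointness upgrades independence of each piece to independence of the union. The case $\alpha \in \B_\G$ is the same argument stripped of the $C$-powers (since $\mu(\alpha)=1$), with the two halves of (\ref{2.0045}) separated again by whether they start or end with $\bar a$.

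The main obstacle I expect is precisely the bookkeeping for (\ref{2.0044}): one has to verify carefully that the two basis vectors contributing to each difference live in coordinates disjoint from those used by (\ref{2.0041})--(\ref{2.0043}), and that the parametrisation $j\mapsto j+1$ really produces pairwise different basis vectors, so that an alleged nontrivial linear relation on the union collapses coordinate-by-coordinate to the trivial one.
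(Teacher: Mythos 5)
Your proposal is correct and follows essentially the same route as the paper: both reduce each family to distinct basis elements of $\L$ in the sense of \cite[Proposition 3.3]{brau}, distinguished as subpaths of $\left(C^{(\alpha,v)}_s\right)^{\mu(\alpha)}a$ by whether they start and/or end at $a$ and by their lengths, with the extra bookkeeping for the differences in (\ref{2.0044}) in the loop case. One small inaccuracy that does not affect the argument: when $a$ is a loop and $j\ge1$, the pure power $\left(\overline{C^{(\alpha,v)}_{s+1}}\right)^{j+1}$ written as a path \emph{does} contain $a$ sandwiched between $q$-subpaths, so the disjointness from (\ref{2.0041})--(\ref{2.0043}) should rest (as your first criterion already provides) on the first arrow and the length rather than on the absence of such an $a$-step.
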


Now, let $\chi$ be our initial element in $\coprod_{v\in\Q_0}v\L v$ and we suppose that $\chi\in\textrm{ker}d_1^*$. Then we have \[\chi_{s(a)}\bar{a}-\bar{a}\chi_{t(a)}=0,\, \forall a\in\Q_1.\] From expressions in (\ref{017}) and (\ref{016}) we can say that \begin{equation}\label{018}x^{(s(a))}-x^{(t(a))}=0,\,\textrm{for all } a\in\Q_1.\end{equation} Because we are assuming that $\L$ is indecomposable it follows that the induced quiver $\Q$ is connected; since $\Q$ is formed by special cycles the expression in (\ref{018}) gives us that \begin{equation}\label{019}x^{(v)}=x^{(w)},\,\,\textrm{for all } v,w\in\Q_0.\end{equation} But  by (\ref{011}) we see that this implies that $\sum_{v\in\Q_0}v=1_{\L}$ is in the center of $\L$.

Now, let $\alpha\in\A_{\G}$ and $\olCC_{(\alpha)}=\left\{a^{(\alpha)}_{1},\ldots,a^{(\alpha)}_{\val(\alpha)}\right\}$ be the collection of arrows associated to $\alpha$, such that $t(a^{(\alpha)}_{i})=s(a^{(\alpha)}_{i+1})$ for each $1\le i\le\val(\alpha)$, and where $a^{(\alpha)}_{\val(\alpha)+1}=a^{(\alpha)}_{1}$. For an arbitrary arrow $a^{(\alpha)}_i$ in $\olCC_{(\alpha)}$, by looking at (\ref{017}) and (\ref{044}) we see that if $a^{(\alpha)}_{i}$ is either a loop or not a loop we obtain the  system of linear equations \begin{equation}\label{2.0048}y_{j,a^{(\alpha)}_{i}}^{(\alpha)}-y_{j,a^{(\alpha)}_{i+1}}^{(\alpha)}=0,\,\textrm{for all }1\le j\le\mu(\alpha)-1.\end{equation} That is, by considering each arrow in $\olCC_{(\alpha)}$ what we obtain is the following system of linear equations
{\footnotesize\begin{equation}\label{2.0049}
 \begin{matrix}
 y_{1,a^{(\alpha)}_{1}}^{(\alpha)}-y_{1,a^{(\alpha)}_{2}}^{(\alpha)}&=&0, & \cdots, & y_{1,a^{(\alpha)}_{\val(\alpha)}}^{(\alpha)}-y_{1,a^{(\alpha)}_{1}}^{(\alpha)}&=&0\\ y_{2,a^{(\alpha)}_{1}}^{(\alpha)}-y_{2,a^{(\alpha)}_{2}}^{(\alpha)}&=&0, & \cdots, & y_{2,a^{(\alpha)}_{\val(\alpha)}}^{(\alpha)}-y_{2,a^{(\alpha)}_{1}}^{(\alpha)}&=&0\\ \vdots &  & & &\vdots & &\\ 
 y_{\mu(\alpha)-1,a^{(\alpha)}_{1}}^{(\alpha)}-y_{\mu(\alpha)-1,a^{(\alpha)}_{2}}^{(\alpha)}&=&0,& \cdots, & y_{\mu(\alpha)-1,a^{(\alpha)}_{\val(\alpha)}}^{(\alpha)}-y_{\mu(\alpha)-1,a^{(\alpha)}_{1}}^{(\alpha)}&=&0
 \end{matrix}
\end{equation}
}Then we have that this linear system gives us
\begin{equation}\label{2.0050}y^{(\alpha)}_{j}=y_{j,a^{(\alpha)}_{i}}^{(\alpha)}=y_{j,a^{(\alpha)}_{i+1}}^{(\alpha)},\,\begin{array}{l}\forall1\le j\le\mu(\alpha)-1;\\ \forall1\le i\le\val(\alpha).\end{array}\end{equation} So, by returning to the expression (\ref{011}) we see that\[\sum_{j=1}^{\mu(\alpha)-1}\left(\sum_{C\in\CC_{(\alpha)}}y^{(\alpha)}_j\overline{C}^j\right)=\sum_{j=1}^{\mu(\alpha)-1}y^{(\alpha)}_j\left(\sum_{C\in\CC_{(\alpha)}}\overline{C}^j\right)\] is an element of the center of the algebra, and by Observation \ref{ob02} this element is equal to \begin{equation}\label{2.0051}\sum_{j=1}^{\mu(\alpha)-1}y^{(\alpha)}_jC(\alpha)^j.\end{equation} That is, for each $\alpha\in\A_{\G}$ we have that the element in (\ref{2.0051}) is in the center of the algebra.

\begin{obs}\label{ob03}
If $\alpha\in\C_{\G}$ then there exists a unique polygon $V$ such that $\V_{(\alpha)}=\left\{V\right\}$ and $\CC_{(\alpha)}=\left\{a^{(\alpha)}\right\}$. We proved in Proposition \ref{2.pr005}.\ref{2.pr005.1} that $\overline{a^{(\alpha)}}\in Z(\L)$, then $C(\alpha)^j=\left(\overline{a^{(\alpha)}}\right)^j\in Z(\L)$, for every $1\le j\le\mu(\alpha)-1$. So, for each $\alpha\in\C_{\G}$, we have that \begin{equation}\label{2.0052}\bigcup_{\alpha\in\C_{\G}}\left\{C(\alpha)^j\,\big{|}\,1\le j\le\mu(\alpha)-1\right\}\end{equation} is contained in $Z(\L)$.
\end{obs}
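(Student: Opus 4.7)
The plan is essentially a definitional unpacking combined with an appeal to Proposition \ref{2.pr005}.\ref{2.pr005.1} and the fact that $Z(\L)$ is a subring.

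First, I would record the structure of $\CC_{(\alpha)}$ when $\alpha \in \C_\G$. By the definition of $\C_\G$, we have $\val(\alpha) = 1$ and $\mu(\alpha) > 1$. Since $\val(\alpha) = \sum_{V \in \G_1} \occ(\alpha, V) = 1$, there is a unique polygon $V \in \G_1$ with $\occ(\alpha, V) = 1$, so $\V_{(\alpha)} = \{V\}$. As noted in Section \ref{secc00b}, in this case $\CC_{(\alpha)}$ consists of a single loop at the vertex $v$ in $\Q$ associated to $V$; call it $a^{(\alpha)}$. Consequently, by the definition of $C(\alpha)$ in (\ref{041}),
\[
C(\alpha) \;=\; \sum_{C \in \CC_{(\alpha)}} \overline{C} \;=\; \overline{a^{(\alpha)}}.
\]

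Second, I would invoke Proposition \ref{2.pr005}.\ref{2.pr005.1}, which states that $\overline{a} \in Z(\L)$ for every $a \in \olCC_{(\alpha)}$ with $\alpha \in \C_\G$; in particular $\overline{a^{(\alpha)}} \in Z(\L)$. Since the center of an associative algebra is closed under multiplication, every power of a central element is central, so $C(\alpha)^j = \left(\overline{a^{(\alpha)}}\right)^j \in Z(\L)$ for every $j \ge 1$. Restricting to the range $1 \le j \le \mu(\alpha)-1$ and taking the union over $\alpha \in \C_\G$ yields the stated inclusion.

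There is no real obstacle: all the substantive work was performed in Proposition \ref{2.pr005}.\ref{2.pr005.1}, where centrality of the loop $\overline{a^{(\alpha)}}$ was verified via Lemma \ref{2.lem002} together with the type three relations. The restriction $j \le \mu(\alpha)-1$ is natural here because $\overline{a^{(\alpha)}}^{\mu(\alpha)} = C^{(V)}$ is already counted among the central elements listed in Proposition \ref{2.pr005}.\ref{2.pr005.3}, while $\overline{a^{(\alpha)}}^{\mu(\alpha)+1} = 0$ by the type two relations, so the powers $j = 1, \ldots, \mu(\alpha)-1$ are precisely the ones contributing new central elements in this family.
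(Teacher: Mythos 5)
Your proposal is correct and follows essentially the same route as the paper: identify that $\val(\alpha)=1$ forces $\V_{(\alpha)}=\{V\}$ and $\CC_{(\alpha)}=\{a^{(\alpha)}\}$, so $C(\alpha)=\overline{a^{(\alpha)}}$, then cite Proposition \ref{2.pr005}.\ref{2.pr005.1} and use that $Z(\L)$ is closed under products. The paper's observation is exactly this argument stated inline, so nothing is missing.
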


Recall that we are assuming that $\chi\in\textrm{ker}d_1^*$. Consider the case $\alpha\in\A_{\G}$ with $\olCC_{(\alpha)}=\left\{a^{(\alpha)}_1,\ldots,a^{(\alpha)}_{\val(\alpha)}\right\}$ the collection of arrows associated to $\alpha$. By Observation \ref{ob01} and expressions (\ref{017}) and (\ref{044}) we have that for each $1\le i\le\val(\alpha)$
\begin{itemize}
 \item if $a^{(\alpha)}_i$ is not a loop then \[y^{(\alpha)}_{j,a^{(\alpha)}_{i+1},k}=0,\begin{array}{l}\forall0\le j\le\mu(\alpha)-1;\\\forall1\le k\le\occ(\alpha,W)-1;\end{array}\]
 \item if $a^{(\alpha)}_i$ is a loop then \[y^{(\alpha)}_{j,a^{(\alpha)}_{i+1},k}=0,\begin{array}{l}\forall0\le j\le\mu(\alpha)-1;\\\forall1\le k<\occ(\alpha,W)-1;\end{array}\] and \[y^{(\alpha)}_{j,a^{(\alpha)}_{i+1},\occ(\alpha,V)-1}=0,\,\forall0\le j\le\mu(\alpha)-2.\]
\end{itemize}

From this we see that the only scalars that are not necessarily zero are those of the form \[y^{(\alpha)}_{\mu(\alpha)-1,a^{(\alpha)}_{i+1},\occ(\alpha,V)-1}\] when $a^{(\alpha)}_i$ is a loop, which are associated to the oriented cycles defined in (\ref{2.0021}). Now, if $\alpha\in\B_{\G}$ then by Observation \ref{ob01} and expressions (\ref{016}) and (\ref{2.0038}) we have that for each $1\le i\le\val(\alpha)$
\begin{itemize}
 \item if $a^{(\alpha)}_i$ is not a loop \[y^{(\alpha)}_{a^{(\alpha)}_{i+1},k}=0,\,\forall1\le k\le\occ(\alpha,W)-1;\]
 \item if $a^{(\alpha)}_i$ is a loop then \[y^{(\alpha)}_{a^{(\alpha)}_{i+1},k}=0,\,\forall1\le k<\occ(\alpha,V)-1.\]
\end{itemize}
Also at this case we see that the only scalars that are not necessarily equal to zero are those of the form \[y^{(\alpha)}_{a^{(\alpha)}_{i+1},\occ(\alpha,V)-1}\] when $a^{(\alpha)}_i$ is a loop, which also are associate to the cycles defined in (\ref{2.0021}).

Finally getting back to expression in (\ref{011}) and applying each of the previous computations, if $\chi$ is in ker$d_1^*$ then $\chi$ must be a linear combination of the elements of the following sets
\begin{itemize}
 \item $\left\{\,1_{\L}\,\right\}$
 \item $\bigcup_{\alpha\in\A_{\G}\cup\C_{\G}}\left\{\,C(\alpha)^j\,\big{|}\,1\le j\le\mu(\alpha)-1\,\right\}$
 \item$\left\{\,C^{(V)}\,\big{|}\,V\in\G_1\,\right\}$
 \item $\bigcup_{\alpha\in h^{-1}(1)}\left\{\overline{D^{(\alpha)}_{V,s}}\,\big{|}\,(V,s)\in\M^{(\alpha)}\right\}$
\end{itemize}
where $h$ is the map defined in (\ref{2.0018}) and $\M^{(\alpha)}$ the set defined in (\ref{2.0020}). The union of all sets above is disjoint and it is a subset of the $K$-basis of $\coprod_{v\in\Q_0}v\L v$ generating ker$d_1^*$. Hence is a $K$-basis of $Z(\L)$.\\

For the induced quiver $\Q$ of the Brauer configuration $\G$, let $\#\mathit{Loops}(\Q)$ denote the number of loops in the quiver $\Q$, then it is not difficult to prove that \begin{equation}\label{2.0053}\#\mathit{Loops}(\Q)=\sum_{\alpha\in h^{-1}(1)}\left(\sum_{V\in\V_{(\alpha)}}\left|\Psi_{(\alpha)}^{\,\,\,v}\right|\right)+|\C_{\G}|.\end{equation} See (\ref{2.0020}), (\ref{2.0021}) and (\ref{2.0022}) to check. We can say that the dimension of $Z(\L)$ is equal to \[1+\sum_{\alpha\in\A_{\G}\cup\C_{\G}}\left(\mu(\alpha)-1\right)+|\G_1|+\sum_{\alpha\in h^{-1}(1)}\left(\sum_{V\in\V_{(\alpha)}}\left|\Psi^{\,\,\,v}_{(\alpha)}\right|\right).\] Using the fact that $\mu(\alpha)-1=0$ for every $\alpha\in\B_{\G}\cup\T_{\G}$ and the expression in (\ref{2.0053}), the dimension can be expressed as
\begin{eqnarray*}
  & & 1+\sum_{\alpha\in\A_{\G}\cup\B_{\G}\cup\C_{\G}}(\mu(\alpha)-1)+|\G_1|+\#\mathit{Loops}(\Q)-|\C_{\G}|\\ & = & 1+\sum_{\alpha\in\G_0}(\mu(\alpha)-1)+|\G_1|+\#\mathit{Loops}(\Q)-|\C_{\G}|\\ & = & 1+\sum_{\alpha\in\G_0}\mu(\alpha)+|\G_1|-|\G_0|+\#\mathit{Loops}(\Q)-|\C_{\G}|
\end{eqnarray*}

Finally we obtain the desired result.

\begin{theo}\label{2.the001}
 Let $\L=K\Q/I$ be the Brauer configuration algebra associated to the connected and reduced Brauer configuration $\G$. Then\[\textrm{dim}_KZ(\L)=1+\sum_{\alpha\in\G_0}\mu(\alpha)+|\G_1|-|\G_0|+\#\mathit{Loops}(\Q)-|\C_{\G}|,\] where $\C_{\G}=\left\{\gamma\in\G_0\,|\,\val(\gamma)=1\textrm{ and }\mu(\gamma)>1\right\}$.
\end{theo}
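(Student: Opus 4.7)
The plan is to package the detailed case analysis already developed in this section into a clean dimension count via the identification $Z(\L)\cong\ker d_1^*$ from (\ref{p6}). First I would take an arbitrary $\chi\in\coprod_{v\in\Q_0}v\L v$ written in the basis of Proposition \ref{pr003} as in the expansion (\ref{011}), and impose $d_1^*(\chi)=0$ arrow by arrow. The $a$-entry of $d_1^*(\chi)$ then takes one of the forms in (\ref{017}), (\ref{044}), (\ref{016}), (\ref{2.0038}) according to whether the unique $\alpha\in\G_0\setminus\T_\G$ associated to $a$ by Lemma \ref{pr004} lies in $\A_\G$, $\B_\G$, or $\C_\G$, and whether $a$ is a loop; the case $\alpha\in\C_\G$ contributes zero to $d_1^*(\chi)$ directly by Proposition \ref{2.pr005}.\ref{2.pr005.1}.

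Next I would use the linear independence statements in Observation \ref{ob01} to conclude that almost every scalar $y^{(\alpha)}_{j,\cdot}$, $y^{(\alpha)}_{j,\cdot,k}$, $y^{(\beta)}_{\cdot,k}$ appearing in (\ref{011}) must vanish. The surviving freedoms are: a single common value $x^{(v)}=x^{(w)}$ for all vertices (forced by (\ref{018})--(\ref{019}) together with the connectedness of $\Q$), one scalar $y^{(\alpha)}_j$ for each $\alpha\in\A_\G$ and $1\le j\le\mu(\alpha)-1$ arising from the linear system (\ref{2.0049}), one scalar $z^{(v)}$ per polygon, and one scalar per loop $q^{(\alpha,v)}_s$ attached to a central mixed cycle $\overline{D^{(\alpha)}_{V,s}}$. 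Combining this with Observation \ref{ob03} for vertices in $\C_\G$, and invoking Proposition \ref{2.pr005} to certify that each candidate really is central, exhibits an explicit $K$-basis of $Z(\L)$ consisting of the four disjoint families
\[
\{1_\L\},\ \bigcup_{\alpha\in\A_\G\cup\C_\G}\{C(\alpha)^j:1\le j\le\mu(\alpha)-1\},\ \{C^{(V)}:V\in\G_1\},\ \bigcup_{\alpha\in h^{-1}(1)}\{\overline{D^{(\alpha)}_{V,s}}:(V,s)\in\M^{(\alpha)}\}.
\]

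Finally I would count. Their cardinalities are $1$, $\sum_{\alpha\in\A_\G\cup\C_\G}(\mu(\alpha)-1)$, $|\G_1|$, and $\sum_{\alpha\in h^{-1}(1)}\sum_{V\in\V_{(\alpha)}}|\Psi^{\,\,\,v}_{(\alpha)}|$. The identity (\ref{2.0053}) rewrites the last as $\#\mathit{Loops}(\Q)-|\C_\G|$, and since $\mu(\alpha)=1$ for every $\alpha\in\B_\G\cup\T_\G$ the second sum extends to all of $\G_0$ without changing its value. Adding everything up and rewriting $\sum_{\alpha\in\G_0}(\mu(\alpha)-1)=\sum_{\alpha\in\G_0}\mu(\alpha)-|\G_0|$ yields the claimed formula.

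The main obstacle is not in any individual step but in organizing the disjointness of the four families inside the basis of $\coprod_v v\L v$ from Proposition \ref{pr003}, and verifying that the element $\sum_j y_j^{(\alpha)}C(\alpha)^j$ produced by solving (\ref{2.0049}) in each component distributes across the polygons in the way the expansion (\ref{011}) implicitly assumes. Both points are exactly what Observations \ref{ob01} and \ref{ob02} were set up to guarantee, so once they are cited correctly the final tally is immediate.
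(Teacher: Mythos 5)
Your proposal is correct and follows essentially the same route as the paper: the identification $Z(\L)\cong\ker d_1^*$, the arrow-by-arrow analysis of the expansion (\ref{011}) via the expressions (\ref{017}), (\ref{044}), (\ref{016}), (\ref{2.0038}), the linear-independence arguments of Observation \ref{ob01}, and the final count using (\ref{2.0053}). The four families you exhibit as a basis of $Z(\L)$ and the closing arithmetic are exactly those of the paper's proof.
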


\begin{defin}
 Let $\G=(\G_0,\G_1,\mu,\o)$ be a Brauer configuration. We say that $\G$ is a \textit{Brauer graph} if each polygon in $\G_1$ is a 2-gon. The induced algebra by $\G$ is called a \textit{Brauer graph algebra}.
\end{defin}

If the Brauer graph $\G=(\G_0,\G_1,\mu,\o)$ satisfies that the induced graph  $(\G_0,\G_1)$ is a tree, then we say that $\G$ is a \textit{Brauer tree} and the induced Brauer graph algebra a \textit{Brauer tree algebra}.

Let $\G$ be a Brauer tree and suposse that the induced graph $(\G_0,\G_1)$ is different of \begin{equation}\label{045}\xymatrix{\cdot\ar@(ur,ul)@{-}}\end{equation}Hence, in particular, $(\G_0,\G_1)$ has no loops. Now, if $\Q$ is the induced quiver by $\G$, then the only possible loops in $\Q$ are those induced by the vertices in $\C_{\G}$. This implies that \[\#\mathit{Loops}(\Q)=|\C_{\G}|.\] We have the following corollary.
\begin{cor}
 Let $\G=(\G_0,\G_1,\mu,\o)$ be a Brauer tree such that $(\G_0,\G_1)$ is not (\ref{045}). If $\L$ is the induced Brauer tree algebra, then
 \[\textrm{dim}_KZ(\L)=1+\sum_{\alpha\in\G_0}\mu(\alpha)+|\G_1|-|\G_0|.\]
\end{cor}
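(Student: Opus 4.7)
The plan is to deduce the corollary directly from Theorem \ref{2.the001}: after verifying the hypotheses, the only quantity in that formula which can differ from the claimed one is $\#\mathit{Loops}(\Q)-|\C_{\G}|$, and I will show it vanishes for every Brauer tree distinct from (\ref{045}).

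First I would check the two standing hypotheses of Theorem \ref{2.the001}. A Brauer tree has a connected underlying graph, hence the Brauer configuration itself is connected. Moreover every polygon is a $2$-gon, and if a $2$-gon $V$ contained two truncated vertices $\alpha,\beta$, then $\val(\alpha)\mu(\alpha)=\val(\beta)\mu(\beta)=1$, contradicting condition C3; so whenever a $2$-gon meets $\T_{\G}$ it does so in exactly one vertex, and the configuration is reduced.

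The main step is to prove $\#\mathit{Loops}(\Q)=|\C_{\G}|$. A loop at the vertex $v\in\Q_0$ associated with $V\in\G_1$ arises precisely when $V$ occurs as its own successor in the successor sequence at some non-truncated vertex $\alpha\in V$. If $\alpha\in\C_{\G}$ this happens trivially because the sequence has length one, and each such $\alpha$ contributes exactly one loop. If instead $\alpha\in\D_{\G}$, so $\val(\alpha)\geq 2$, then $V$ being a successor of itself at $\alpha$ forces two consecutive occurrences of $V$ in the cyclic successor sequence; this in turn requires $\occ(\alpha,V)\geq 2$. However, since $(\G_0,\G_1)$ is a tree different from (\ref{045}), the underlying graph has no edge-loops, so every $2$-gon has two distinct vertices, forcing $\occ(\alpha,V)\leq 1$ for all $\alpha$ and $V$. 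Consequently no vertex in $\D_{\G}$ contributes a loop to $\Q$, and the assignment $\alpha\mapsto(\textrm{loop at the vertex of }\Q\textrm{ associated to the unique polygon containing }\alpha)$ is a bijection $\C_{\G}\to\{\text{loops of }\Q\}$.

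Substituting $\#\mathit{Loops}(\Q)-|\C_{\G}|=0$ into Theorem \ref{2.the001} then immediately yields the stated formula. The only genuinely non-routine point is the case analysis in the previous paragraph for $\alpha\in\D_{\G}$; but once one extracts the combinatorial consequence $\occ(\alpha,V)\le 1$ from the tree hypothesis (combined with the exclusion of (\ref{045})), this becomes a direct observation, so the whole argument is essentially a bookkeeping exercise on top of the main theorem.
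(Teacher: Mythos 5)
Your proof is correct and follows essentially the same route as the paper: both reduce the corollary to Theorem \ref{2.the001} by showing $\#\mathit{Loops}(\Q)=|\C_{\G}|$, using that a tree other than (\ref{045}) has no edge-loops, so $\occ(\alpha,V)\le 1$ for all $\alpha$ and $V$ and hence only vertices in $\C_{\G}$ can contribute loops to $\Q$. Your explicit verification that a Brauer tree is automatically connected and reduced (via condition C3) is a detail the paper leaves implicit, but otherwise the arguments coincide.
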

\begin{obs}
 By convention the algebra $k[x]/(x^2)$ is considered as a Brauer graph algebra. However, there is no Brauer configuration that induces this algebra.
\end{obs}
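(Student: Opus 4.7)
The plan is to derive a contradiction by confronting two properties of $K[x]/(x^2)$ with structural facts about Brauer configuration algebras that are already established earlier in the paper.

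Suppose for contradiction that some Brauer configuration $\G = (\G_0, \G_1, \mu, \o)$ satisfies $\L_{\G} \cong K[x]/(x^2)$. First I would show that $\G$ must be connected. The algebra $K[x]/(x^2)$ is a $2$-dimensional local $K$-algebra, hence indecomposable. If $\G$ were a disjoint union of non-empty Brauer configurations $\G^{(1)}, \ldots, \G^{(k)}$ with $k \geq 2$, then the quiver $\Q_{\G}$ would decompose as the disjoint union of $\Q_{\G^{(1)}}, \ldots, \Q_{\G^{(k)}}$, and since the special cycles, together with the relations of types one, two and three, are each built from arrows that lie entirely within a single connected component, $\L_{\G}$ would factor as $\L_{\G^{(1)}} \times \cdots \times \L_{\G^{(k)}}$. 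This would contradict the indecomposability of $K[x]/(x^2)$, so $\G$ must be connected.

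Having established connectedness, I would then invoke the proposition proved earlier in Section \ref{secc00b} which states that $\textrm{rad}^2(\L_{\G}) \neq 0$ whenever $\G$ is connected. On the other hand, in $K[x]/(x^2)$ the Jacobson radical is $(x)$, and hence $\textrm{rad}^2 = (x^2) = 0$. This directly contradicts the isomorphism $\L_{\G} \cong K[x]/(x^2)$, and the observation follows.

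There is no substantive obstacle: the observation is essentially a one-line corollary of the two structural results already in hand. The conceptual point is that $K[x]/(x^2)$ is simultaneously indecomposable and square-zero-radical, whereas every indecomposable Brauer configuration algebra arises from a connected $\G$ and is therefore forced by condition C3 to have a non-trivial element in $\textrm{rad}^2$ (either an iterated loop coming from a vertex in $\C_{\G}$ with $\mu \geq 2$, or two consecutive arrows in a special $\alpha$-cycle with $\val(\alpha) \geq 2$).
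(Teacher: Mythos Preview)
Your proof is correct. The paper itself offers no argument for this observation---it is simply stated---so there is nothing to compare against directly. Your approach, reducing to the connected case via indecomposability of $K[x]/(x^2)$ and then invoking the proposition from Section~\ref{secc00b} that $\textrm{rad}^2(\L_{\G}) \neq 0$ for every connected Brauer configuration $\G$, is exactly the natural justification and is precisely what the surrounding material in the paper sets up. One minor remark: in place of your explicit decomposition argument for the disconnected case, you could simply cite \cite[Proposition~3.5]{brau} (already used in the paper), which gives directly that $\L_{\G}$ is indecomposable if and only if $\G$ is connected.
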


\section{Some examples}

In this section we consider some examples to apply the formula obtained previously.

\begin{ejem}
 Consider the unoriented graph \begin{equation}\label{0029}\begin{split}\xymatrix{ & & 4 \\ & 1\ar@{-}[rd]^{V_1}\ar@{-}[ru]^{V_4} & \\2\ar@{-}[rr]_{V_2}\ar@{-}[ru]^{V_3} & & 3}\end{split}\end{equation} We can represent this unoriented graph by the ordered pair $(\G_0,\G_1)$ where $\G_0=\{\,1,2,3,4\,\}$, $\G_1=\{\,V_1,V_2,V_3,V_4\,\}$ and \[\begin{array}{rcl}V_1 & = & \left\{1,3\right\},\\V_2 & = & \left\{2,3\right\},\end{array}\begin{array}{rcl}V_3 & = & \left\{1,2\right\},\\V_4 & = & \left\{1,4\right\}.\end{array}\] If we define the multiplicity function as $\mu\equiv1$, and the orientation $\o$ for the nontruncated vertices given by \begin{eqnarray*}1 & : & V_1<V_3<V_4,\\2 & : & V_3<V_2,\\3 & : & V_1<V_2,\end{eqnarray*} we have that the tuple $\G=(\G_0,\G_1,\mu,\o)$ is a Brauer graph. Its induced quiver $\Q$ is given by \begin{equation}\label{0030}\begin{split}\xymatrix{ & & v_4\ar@/_1.4pc/[lldd]_{a^{(1)}_3} & & \\ & & v_2\ar@/_1.1pc/[drr]_{a^{(2)}_2}\ar@/_1.1pc/[lld]_{a^{(3)}_2} & & \\v_1\ar@/_1.4pc/[rrrr]
_{a^{(1)}_1}\ar@/_1.1pc/[rru]_{a^{(3)}_1} & & & & v_3\ar@/_1.1pc/[ull]_{a^{(2)}_1}\ar@/_1.4pc/[uull]_{a^{(1)}_2}}\end{split}\end{equation} In this example we can see that $\B_{\G}=\{1,2,3\}$ and $\T_{\G}=\{4\}$, hence if $\L$ is the induced Brauer graph algebra we have that the value of the dimension of the center of $\L$ is equal to \begin{eqnarray*}\textrm{dim}_KZ(\L) & = & 1+\sum_{\alpha\in\G_0}\mu(\alpha)+|\G_1|-|\G_0|+\#\mathit{Loops}(\Q)-|\C_{\G}|\\ & = & 1+4+4-4+0-0\\ & = & 5\end{eqnarray*} and the elements of the basis of the center are given by
 \begin{itemize}
  \item $1_{\L}$
  \item $\begin{array}{rcccc}C^{(V_1)} & = & \overline{a^{(1)}_1a^{(1)}_2a^{(1)}_3} & = & \overline{a^{(3)}_1a^{(3)}_2},\\C^{(V_2)} & = & \overline{a^{(3)}_2a^{(3)}_1} & = & \overline{a^{(2)}_2a^{(2)}_1},\\C^{(V_3)} & = & \overline{a^{(1)}_2a^{(1)}_3a^{(1)}_1} & = & \overline{a^{(2)}_1a^{(2)}_2},\\C^{(V_4)} & = & \overline{a^{(1)}_3a^{(1)}_1a^{(1)}_2}. & &  \end{array}$
 \end{itemize}
\end{ejem}

\begin{ejem}
 In \cite{tai} the authors calculated the Hochschild cohomology ring for the Brauer graph algebra whose Brauer graph is given by a cycle with $m\ge1$ edges and $m$ vertices, and the multiplicity function is equal to $N$ at each vertex. We can represent this Brauer graph with the following Brauer configuration. First we assume that $m\ge2$. Let $\G=(\G_0,\G_1,\mu,\o)$ be the Brauer configuration given by
\begin{eqnarray*}
 \G_0 & = & \mathbb{Z}_m\\
 \G_1 & = & \left\{V_i\,|\,i\in\mathbb{Z}_m\right\}
\end{eqnarray*}
with $V_i=\left\{i,i+1\right\}$, where the multiplicity function is $\mu\equiv N$ and the orientation $\o$ is given by the succesor sequences \[i:V_{i-1}<V_i,\] for each $i\in\mathbb{Z}_m$. At this configuration we have that \[\C_{\G}=\left\{i\in\G_0\,|\,\val(i)=1\textrm{ and }\mu(i)>1\right\}=\emptyset,\] and in the induced quiver $\Q$ we also have that $\#\mathit{Loops}(\Q)=0$. If $\L$ es the induced Brauer configuration algebra we have that the dimension of the center is
\begin{eqnarray*}
 \textrm{dim}_KZ(\L) & = & 1+\sum_{i\in\G_0}\mu(i)+|\G_1|-|\G_0|+\#\mathit{Loops}(\Q)-|\C_{\G}|\\
  & = & 1+Nm+m-m+0-0\\
  & = & 1+Nm
\end{eqnarray*}
which is the same value in \cite[Theorem 3.1]{tai}. Now, when $m=1$ the Brauer configuration  is simply $\left(\{1\},V=\{1,1\},\mu\equiv N,\o\right)$, where the orientation $\o$ is given by the successor sequence $1:V<V$. We have that the induced quiver $\Q$ is \[\xymatrix{v\ar@(ur,ul)_{a}\ar@(dl,dr)_{\bar{a}}}\] In this case we have that $\#\mathit{Loops}(\Q)=2$ and $\C_{\G}=\emptyset$, then if $\L$ is the induced Brauer graph algebra we have that the dimension value of the center of $\L$ is equal to \begin{eqnarray*}\textrm{dim}_KZ(\L) & = & 1+N+1-1+2-0\\ & = & N+3\end{eqnarray*} which coincides with the value given in \cite[Theorem 7.1]{tai}.
\end{ejem}



\begin{thebibliography}{999999}
 \bibitem[B]{nong} Bardzell, Michael J. \textit{Noncommutative Groebner Bases and Hochschild Cohomology}, Contemp. Math., \textbf{286} (2001), 227-240.
 \bibitem[BM]{indbo} Bardzell, Michael J.; Marcos, Eduardo N. \textit{Induced Boundary Maps for the Cohomology of Monomial and Auslander Algebras}, CMS Conf. Proc., \textbf{24} (1998), 47-54.
 \bibitem[GN]{secnd} Green, Edward L.; Snashall, Nicole. \textit{Projective Bimodule Resolutions of an Algebra and Vanishing of the Second Cohomology Group}, Forum Math., \textbf{16} (2004), 17-36.
 \bibitem[GSS]{green1} Green, Edward L.; Schroll, Sibylle; Snashall, Nicole. \textit{Group actions and coverings of Brauer graph algebras.} Glasg. Math. J. \textbf{56} (2014), no. 2, 439-464.
 \bibitem[GSST]{green2} Green, Edward L.; Schroll, Sibylle; Snashall, Nicole; Taillefer, Rachel. \textit{The Ext algebra of a Brauer graph algebra}, J. Noncommut. Geom., \textbf{11} (2017), no. 2, 537-579. 
 \bibitem[GS]{brau} Green, Edward L.; Schroll, Sibylle. \textit{Brauer Configuration Algebras: A Generalization of Brauer Graph Algebras}, Bull. Sci. math., \textbf{141} (2017), 539-572.
 \bibitem[ST]{tai} Snashall, N.; Taillefer, R. \textit{The Hochschild cohomology ring of a class of special biserial algebras}, J. Algebra Appl. \textbf{9} (2010), 73-122.
 
\end{thebibliography}
\end{document}